\title{Betti numbers of monomial ideals in four variables}
\author{Guillermo Alesandroni}
\address{2000 Rosario, Santa Fe, Argentina}
\email{guillea@okstate.edu, alesandronig@yahoo.com}
\newtheorem{theorem}{Theorem}[section]
\newtheorem{corollary}[theorem]{Corollary}
\newtheorem{lemma}[theorem]{Lemma}
\theoremstyle{definition}
\newtheorem{definition}[theorem]{Definition}
\newtheorem{example}[theorem]{Example}
\newtheorem{construction}[theorem]{Construction}
\newtheorem{note}[theorem]{Note}
\DeclareMathOperator{\betti}{b}
\DeclareMathOperator{\pd}{pd}
\DeclareMathOperator{\lcm}{lcm}
\DeclareMathOperator{\Max}{max}
\begin{document}
\maketitle
\begin{abstract}
We express the multigraded Betti numbers of monomial ideals in 4 variables in terms of the multigraded Betti numbers of 66 squarefree monomial ideals, also in 4 variables. We use this class of 66 ideals to prove that monomial resolutions in 4 variables are independent of the base field. In addition, we give a formula for the Betti numbers of an arbitrary monomial ideal in 4 variables.
\end{abstract}

\section{Introduction}
Motivated by the work of Ezra Miller [Mi], this article is entirely concerned with monomial resolutions in 4 variables. The key argument in this study is the fact that the multigraded Betti numbers of an arbitrary monomial ideal in 4 variables can be expressed in terms of the multigraded Betti numbers of squarefree ideals, also in 4 variables. 

Every monomial ideal can be expressed as a squarefree ideal by polarizing. However, the technique of polarization requires adjoining many new variables. Thus, the polarization of an ideal in 4 variables will usually be an ideal in more than 4 variables. The idea that we introduce in this paper is different in the sense that we always work in 4 variables. Indeed, we reduce the general case to the study of 66 squarefree tetravariate ideals. After considering each of these 66 cases, we conclude that monomial resolutions in 4 variables are independent of the base field, and we construct a formula for the Betti numbers of an arbitrary monomial ideal in 4 variables. 

Our work is organized as follows. Section 2 concerns background and notation. In Section 3, we explain how to express the multigraded Betti numbers of an ideal in terms of the multigraded Betti numbers of 66 squarefree ideals. Section 4 gives the list of 66 squarefree ideals mentioned above. In section 5, we prove that resolutions in 4 variables are characteristic-independent. Section 6 gives a formula for the Betti numbers of all ideals in 4 variables. Section 7 discusses the advantage of formulas over algorithms. In Section 8, we close the article with questions and final thoughts.

 \section{Background and notation}
 Throughout this paper $S$ represents a polynomial ring over an arbitrary field $k$, in 4 variables. The letter $M$ always denotes a monomial ideal
in $S$, and the symbol $\mathbb{T}_M$ always represents the Taylor resolution of $S/M$. If $m$ is the multidegree of a basis element of $\mathbb{T}_M$, sometimes we will say that $m$ is a multidegree of $\mathbb{T}_M$, for short. The unconventional notation $m \in \mathbb{T}_M$ will also convey this idea. A nice construction of the Taylor resolution as a multigraded free resolution can be found in [Me].

\begin{definition}
Let $M$ be minimally generated by a set of monomials $G$.
\begin{itemize}
\item A monomial $m\in G$ is called \textbf{dominant} (in $G$) if there is a variable $x$, such that for all $m'\in G\setminus\{m\}$, the exponent with 
which $x$ appears in the factorization of $m$ is larger than the exponent with which $x$ appears in the factorization of $m'$.
We say that $G$ is a \textbf{dominant set} if each of its elements is dominant. The ideal $M$ is a \textbf{dominant ideal} if $G$ is a dominant set.
\item $G$ is called \textbf{$p$-semidominant} if $G$ contains
exactly $p$ nondominant monomials. The ideal $M$ is \textbf{$p$-semidominant} if $G$ is $p$-semidominant.
\end{itemize}
\end{definition}

 \begin{example}\label{example 1}
  Let $M_1$, $M_2$, and $M_3$ be minimally generated by $G_1=\{a^2,b^3,ab\}$, $G_2=\{ab,bc,ac\}$, and $G_3=\{a^2b,ab^3c,bc^2,ad^2\}$, respectively. Note that $a^2$ and $b^3$ are dominant in $G_1$, but $ab$ is not. Thus, both the set $G_1$ and the ideal $M_1$ are {1}-semidominant. On the other hand, $ab$, $bc$, and $ac$ are nondominant in $G_2$. Therefore, $G_2$ and $M_2$ are {3}-semidominant. Finally, $a^2b$, $ab^3c$, $bc^2$, and $ad^2$ are dominant in $G_3$. Thus, $G_3$ and $M_3$ are dominant.
\end{example}

The next theorem gives a complete characterization of when the Taylor resolution is minimal [Al].

\begin{theorem} \label{Taylor}
$\mathbb{T}_M$ is minimal if and only if $M$ is dominant.
\end{theorem}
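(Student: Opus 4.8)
The plan is to translate the statement into the standard combinatorial description of the Taylor complex and then perform two short exponent computations. Write $G=\{m_1,\dots,m_q\}$ for the minimal generating set of $M$. Then $\mathbb{T}_M$ has an $S$-basis $\{e_\sigma:\sigma\subseteq\{1,\dots,q\}\}$, with $e_\sigma$ in homological degree $|\sigma|$ and multidegree $\lcm(m_i:i\in\sigma)$, and
\[
\partial(e_\sigma)=\sum_{i\in\sigma}\pm\,\frac{\lcm(m_j:j\in\sigma)}{\lcm(m_j:j\in\sigma\setminus\{i\})}\;e_{\sigma\setminus\{i\}}.
\]
Each coefficient is a genuine monomial of $S$ (because $\lcm(m_j:j\in\sigma\setminus\{i\})$ divides $\lcm(m_j:j\in\sigma)$), and it is a unit exactly when those two lcm's coincide. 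Since a multigraded free resolution is minimal iff every entry of every differential matrix lies in the maximal ideal, the theorem is equivalent to the purely combinatorial claim: $M$ is dominant iff $\lcm(m_j:j\in\sigma)\neq\lcm(m_j:j\in\sigma\setminus\{i\})$ for every $\sigma$ with $|\sigma|\ge2$ and every $i\in\sigma$.

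For the direction ``$M$ dominant $\Rightarrow$ $\mathbb{T}_M$ minimal'' I would argue directly. Fix $\sigma$ with $|\sigma|\ge2$ and $i\in\sigma$. Dominance of $m_i$ supplies a variable $x$ whose exponent in $m_i$ strictly exceeds its exponent in every other generator, in particular in every $m_j$ with $j\in\sigma\setminus\{i\}$. Hence the exponent of $x$ in $\lcm(m_j:j\in\sigma)$ equals the exponent of $x$ in $m_i$, which is strictly larger than the exponent of $x$ in $\lcm(m_j:j\in\sigma\setminus\{i\})$ (a nonempty lcm, since $|\sigma|\ge2$). The two lcm's therefore differ, the criterion holds, and $\mathbb{T}_M$ is minimal.

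For the converse I would construct an explicit witness of non-minimality. Suppose some $m\in G$ is nondominant, and let $x_1,\dots,x_t$ be the variables occurring in $m$, with exponents $a_1,\dots,a_t\ge1$ (so $t\ge1$, as $M$ is a proper ideal). Negating the definition of dominant: for each $r$ there is a generator $m'_r\in G\setminus\{m\}$ with $\deg_{x_r}(m'_r)\ge a_r$. Put $\sigma=\{m,m'_1,\dots,m'_t\}$ and $\tau=\sigma\setminus\{m\}$; then $|\sigma|\ge2$ and $\tau\neq\emptyset$. A variable-by-variable check shows $\lcm(m_j:j\in\sigma)=\lcm(m_j:j\in\tau)$: for each $x_r$ the exponent in the $\tau$-lcm is already $\ge a_r$, while any variable not dividing $m$ contributes exponent $0$ from $m$, so adjoining $m$ never raises an exponent. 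Consequently the coefficient of $e_\tau$ in $\partial(e_\sigma)$ is $\pm1$, and $\mathbb{T}_M$ is not minimal.

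The one step that requires care is the first paragraph: one has to be comfortable with the fact that, although distinct basis elements in a fixed homological degree of $\mathbb{T}_M$ may carry distinct multidegrees, minimality is still detected entrywise by membership in the maximal ideal, which for monomial coefficients is precisely the displayed lcm condition. Once that reduction is accepted, both implications are immediate; the converse is only marginally more delicate, because one must assemble a single subset $\sigma$ that simultaneously accounts for all variables of the offending generator.
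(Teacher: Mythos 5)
Your argument is correct. Note that the paper does not prove this statement at all: it is quoted from the reference [Al], so there is no internal proof to compare against. What you give is the standard (and essentially the only natural) argument: reduce minimality of $\mathbb{T}_M$ to the condition $\lcm(m_j : j\in\sigma)\neq\lcm(m_j : j\in\sigma\setminus\{i\})$ on the Taylor differentials, derive this condition from dominance of each $m_i$ via the witnessing variable, and, conversely, build from a nondominant generator $m$ a face $\sigma=\{m,m'_1,\dots,m'_t\}$ whose lcm is unchanged by deleting $m$. Both implications are sound, including the choice of one $m'_r$ per variable dividing $m$ obtained by negating dominance; the only caveat is the degenerate case $M=S$, which you correctly exclude by assuming $M$ proper.
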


The formula for the fourth Betti numbers of $S/M$ is already known [Al2, Corollary 6.3 (i)], and we will state it below. First, we will need a few definitions.

 Let $m_1=x_1^{\alpha_1}\ldots x_4^{\alpha_4}$, and $m_2=x_1^{\beta_1}\ldots x_4^{\beta_4}$ be two monomials of $S$. We say that  $m_1$ \textbf{strongly divides} $m_2$, 
 if $\alpha_i<\beta_i$, whenever $\alpha_i\neq 0$. For instance, $m_1= x_1 x_2$ strongly divides $m_2= x_1^2 x_2^2 x_3 x_4$; but $m_3= x_1 x_2 x_3$ does not strongly divide $m_2$, as $x_3$ appears with exponent 1 in the factorizations of $m_2$ and $m_3$.

Let $G$ be the minimal generating set of $M$. Define the class
 $\mathscr{D}_M=
 \{D \subseteq G: D$ is a dominant set of cardinality $4$, such that no element of $G$ strongly divides $\lcm D$$\}$.

\begin{theorem} \label{fourthbetti}
Let $L=\{m: m=\lcm D \text{, for some } D\in \mathscr{D}_M\}$. Then, $\betti_4(S/M)=\# L$.
\end{theorem}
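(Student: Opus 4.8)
The plan is to compute $\betti_4(S/M)=\dim_k\operatorname{Tor}_4^S(S/M,k)$ directly from the Koszul complex $K_\bullet$ on the four variables $x_1,\dots,x_4$, exploiting that homological degree $4$ is the very top of $K_\bullet$. Since $K_4\cong S$ is generated in multidegree $(1,1,1,1)$ and $K_5=0$, one has $\operatorname{Tor}_4^S(S/M,k)=H_4(K_\bullet\otimes_S S/M)=\Ker\bigl(K_4\otimes_S S/M\to K_3\otimes_S S/M\bigr)$; writing out the Koszul differential shows that in multidegree $m$ this kernel is the degree-$(m-(1,1,1,1))$ component of $(M:\mathfrak{m})/M$, where $\mathfrak{m}=(x_1,x_2,x_3,x_4)$ (and it is $0$ unless $x_1x_2x_3x_4\mid m$). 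Summing over $m$ gives $\betti_4(S/M)=\dim_k(M:\mathfrak{m})/M$, and since this is a quotient of monomial ideals the dimension counts the monomials $u$ with $u\notin M$ but $ux_i\in M$ for every $i$; I will call these the \emph{socle monomials} of $M$.

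It now suffices to biject the socle monomials with $L$. The maps $\varphi\colon u\mapsto u\,x_1x_2x_3x_4$ and $\psi\colon m\mapsto m/(x_1x_2x_3x_4)$ are mutually inverse bijections between all monomials and the monomials divisible by $x_1x_2x_3x_4$, so it is enough to show that $\varphi$ carries each socle monomial into $L$, that every element of $L$ is divisible by $x_1x_2x_3x_4$, and that $\psi$ carries each element of $L$ to a socle monomial. For $\varphi$: given a socle monomial $u$, choose for each $i$ a generator $g_i\in G$ dividing $ux_i$; since $u\notin M$ we have $g_i\nmid u$, which forces $\exp_{x_i}(g_i)=\exp_{x_i}(u)+1$ and $\exp_{x_j}(g_i)\le\exp_{x_j}(u)$ for $j\ne i$. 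These inequalities show at once that the $g_i$ are pairwise distinct, that $x_i$ witnesses that $g_i$ is dominant inside $D:=\{g_1,g_2,g_3,g_4\}$, and that $\lcm D=u\,x_1x_2x_3x_4$; and no $g\in G$ strongly divides $u\,x_1x_2x_3x_4$, since such a $g$ would satisfy $\exp_{x_i}(g)\le\exp_{x_i}(u)$ for all $i$ and hence divide $u$, contradicting $u\notin M$. Thus $D\in\mathscr{D}_M$ and $\varphi(u)=\lcm D\in L$.

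For the other direction, let $D=\{g_1,g_2,g_3,g_4\}\in\mathscr{D}_M$ and $m=\lcm D$. Because $|D|=4$ and there are exactly $4$ variables, the variables witnessing dominance of the four elements of $D$ must be pairwise distinct (otherwise some variable would have strictly largest exponent in two different $g_i$'s, which is impossible); relabeling, $\exp_{x_i}(g_i)>\exp_{x_i}(g_j)$ for all $j\ne i$, so $\exp_{x_i}(m)=\exp_{x_i}(g_i)\ge 1$ and therefore $x_1x_2x_3x_4\mid m$. Set $u=m/(x_1x_2x_3x_4)$. If $u\in M$ then some generator divides $u=m/(x_1x_2x_3x_4)$ and hence strongly divides $m$, contradicting $D\in\mathscr{D}_M$; so $u\notin M$. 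Finally, for each $i$ a comparison of exponents gives $g_i\mid ux_i$ (at $x_i$ because $g_i\mid m$; at $x_j$ with $j\ne i$ because $\exp_{x_j}(g_i)<\exp_{x_j}(m)$ forces $\exp_{x_j}(g_i)\le\exp_{x_j}(u)$), so $ux_i\in M$, i.e.\ $u$ is a socle monomial. Combining this with the Koszul computation, $\varphi$ restricts to a bijection from the socle monomials onto $L$, and $\betti_4(S/M)=\#\{\text{socle monomials}\}=\#L$.

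I expect the only real obstacle to be the bookkeeping in these two directions, especially the step where the dominance-witnessing variables of a $4$-element dominant set are forced to be distinct: this is exactly where ``four variables'' is used, and it is the analogue of this fact in more variables that would fail. Matching ``$u\notin M$'' against ``no element of $G$ strongly divides $\lcm D$'' cleanly in both directions is the other point to handle with care. Beyond these there is no deep homological input: the whole argument rests on the elementary observation that in four variables $\operatorname{Tor}_4$ is computed by the one-dimensional top of the Koszul complex.
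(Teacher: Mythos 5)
Your proof is correct, but it cannot be "the same as the paper's" because the paper gives no proof of this statement: Theorem \ref{fourthbetti} is imported verbatim from [Al2, Corollary 6.3(i)]. What you have written is a self-contained alternative. Your two ingredients both check out: (i) the identification of $\operatorname{Tor}_4^S(S/M,k)$ with the top Koszul homology, hence with $(M:\mathfrak{m})/M$ shifted by $x_1x_2x_3x_4$, so that $\betti_4(S/M)$ counts the socle monomials; and (ii) the explicit bijection $u\mapsto u\,x_1x_2x_3x_4$ between socle monomials and $L$. In the forward direction the exponent bookkeeping ($\exp_{x_i}(g_i)=\exp_{x_i}(u)+1$, $\exp_{x_j}(g_i)\le\exp_{x_j}(u)$) correctly yields distinctness, dominance, the value of the lcm, and the non-existence of a strong divisor; in the backward direction the key step --- that the four dominance witnesses are forced to be four distinct variables --- is right, since the witness sets are nonempty and pairwise disjoint inside a four-element universe, hence singletons partitioning $\{x_1,\dots,x_4\}$. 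You also correctly biject onto $L$ (the set of lcms) rather than onto $\mathscr{D}_M$, which matters because distinct sets in $\mathscr{D}_M$ can share an lcm, as the paper's own example shows. Two minor remarks. First, your closing comment that the distinct-witnesses step ``would fail'' in more variables is not quite right: nothing in your argument uses $4$ beyond ``number of variables equals the cardinality of $D$,'' so you have in fact proved the $n$-variable top Betti number formula, which is what [Al2] states. Second, your route has the virtue of bypassing the Taylor resolution and any cancellation machinery entirely; the price is that it only works for the top homological degree, where $K_5=0$ turns $\operatorname{Tor}_4$ into a kernel rather than a subquotient.
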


For example, if $M$ has minimal generating set $G=\{x_1^2, x_2^2, x_3^2, x_1 x_4^2, x_2 x_4^2\}$, then the class $\mathscr{D}_M$ consists of only two sets: $D_1=\{x_1^2, x_2^2, x_3^2, x_1 x_4^2\}$, and $D_2=\{x_1^2, x_2^2, x_3^2, x_2 x_4^2\}$. Since $\lcm D_1=\lcm D_2=x_1^2 x_2^2 x_3^2 x_4^2$, it follows from Theorem \ref{fourthbetti} that $\betti_4(S/M)=1$. 

\section{Reducing to the square free case}

In this section we explain how to interpret the Betti numbers of an ideal in 4 variables in terms of the Betti numbers of squarefree ideals in 4 variables. Part of this material has been taken from [Al2] and adapted to the study of free resolutions in 4 variables. The next theorem is due to Gasharov, Hibi, and Peeva [GHP].

\begin{theorem}\label{Theorem 1}
Let $M$ be minimally generated by $G$, and consider a multidegree $m$ of $\mathbb{T}_M$. Let $M_m$ be the ideal generated by all monomials of $G$ dividing $m$. Then $\betti_{i,m}(S/M) = \betti_{i,m}(S/M_m)$, for all $i$.
\end{theorem}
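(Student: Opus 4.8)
The plan is to compare the two Taylor complexes $\mathbb{T}_M$ and $\mathbb{T}_{M_m}$ in the single multidegree $m$, and show that the subcomplexes sitting in multidegree $m$ coincide, so that the homology computing $\betti_{i,m}$ agrees. First I would recall the explicit description of the Taylor resolution: if $G = \{g_1,\ldots,g_t\}$ is the minimal generating set of $M$, then $\mathbb{T}_M$ has a basis indexed by subsets $F \subseteq \{1,\ldots,t\}$, with the basis element $e_F$ sitting in multidegree $\lcm\{g_j : j \in F\}$ (in homological degree $|F|$), and differential given by the usual alternating sum with coefficients $\lcm(F)/\lcm(F \setminus \{j\})$ whenever that quotient is a monomial (equivalently, always, but it is a monomial in the squarefree-sign sense). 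The key observation is that a basis element $e_F$ contributes to multidegree $m$ of $\mathbb{T}_M$ if and only if $\lcm\{g_j : j \in F\}$ divides $m$, which happens if and only if $g_j \mid m$ for every $j \in F$ — that is, if and only if every generator indexed by $F$ belongs to $M_m$.

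Next I would make precise what ``$\betti_{i,m}$'' means homologically. Since $\mathbb{T}_M$ is a multigraded free resolution of $S/M$, we have $\betti_{i,m}(S/M) = \dim_k \operatorname{Tor}_i^S(S/M, k)_m$, and this can be computed as the homology at spot $i$ of the complex $(\mathbb{T}_M \otimes_S k)$ in multidegree $m$. But $\mathbb{T}_M \otimes_S k$ is not quite the right object for a multidegree-by-multidegree comparison; instead I would use the cleaner fact that $\betti_{i,m}(S/M)$ equals the dimension of the degree-$m$ part of the homology of the complex obtained by restricting $\mathbb{T}_M$ to multidegrees dividing... no — more directly, since the Taylor complex is multigraded, its degree-$m$ strand $(\mathbb{T}_M)_m$ is itself a complex of $k$-vector spaces, and one shows (this is standard) that $\betti_{i,m}(S/M) = \dim_k H_i\big((\mathbb{T}_M)_m\big)$ when $\mathbb{T}_M$ is a minimal-or-not resolution, using that tensoring a free resolution with $k$ and taking the degree-$m$ part gives the same homology as the degree-$m$ part of $\mathbb{T}_M$ viewed appropriately. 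Rather than belabor this, I would cite the standard computation of multigraded Betti numbers via any multigraded free resolution (e.g.\ via the simplicial/Lyubeznik-type description, or simply Tor computed from $\mathbb{T}_M$) and reduce the whole statement to: \emph{the degree-$m$ strand of $\mathbb{T}_M$ equals the degree-$m$ strand of $\mathbb{T}_{M_m}$ as complexes of $k$-vector spaces.}

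The heart of the argument is then this last equality, and it follows from the observation above: the basis elements $e_F$ of $\mathbb{T}_M$ landing in multidegree $m$ are exactly those with all $g_j$ ($j \in F$) dividing $m$, i.e.\ those indexed by subsets of the generating set of $M_m$; since the generating set of $M_m$ is precisely $\{g \in G : g \mid m\}$ (note $M_m$ is automatically minimally generated by this set because $G$ was a minimal generating set of $M$ and divisibility among a subset is inherited), these are exactly the basis elements of $\mathbb{T}_{M_m}$ landing in multidegree $m$. Moreover the Taylor differential is ``local'': the coefficient of $e_{F \setminus \{j\}}$ in $\partial e_F$ depends only on $\lcm(F)$ and $\lcm(F \setminus \{j\})$, hence only on the monomials $g_j$ for $j \in F$, so it is the same whether we compute in $\mathbb{T}_M$ or in $\mathbb{T}_{M_m}$. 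Therefore $(\mathbb{T}_M)_m = (\mathbb{T}_{M_m})_m$ as complexes, their homologies agree, and $\betti_{i,m}(S/M) = \betti_{i,m}(S/M_m)$ for all $i$.

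I expect the main obstacle to be purely expository rather than mathematical: namely, setting up cleanly the identification of $\betti_{i,m}$ with the homology of the degree-$m$ strand of the Taylor complex (as opposed to the degree-$m$ strand of $\mathbb{T}_M \otimes k$), and making sure the ``locality'' of the Taylor differential is stated so that the strand-wise equality of complexes is manifest. A secondary small point to check is that $M_m$ really is minimally generated by $\{g \in G : g \mid m\}$ — but this is immediate since no element of $G$ divides another. Once those bookkeeping issues are dispatched, the proof is essentially the one-line observation that ``the part of Taylor you see in multidegree $m$ only involves generators dividing $m$.''
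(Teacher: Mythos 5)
The paper itself gives no proof of this theorem --- it is quoted from [GHP] --- so your argument must stand on its own. Its core is sound and is essentially the standard ``restriction'' argument: the set $G_m=\{g\in G: g\mid m\}$ minimally generates $M_m$ (no element of $G$ divides another), the Taylor differential coefficient of $e_{F\setminus\{j\}}$ in $\partial e_F$ depends only on $\lcm(F)$ and $\lcm(F\setminus\{j\})$, and therefore $\mathbb{T}_{M_m}$ is literally the subcomplex of $\mathbb{T}_M$ spanned by $\{e_F: F\subseteq G_m\}$, which is exactly the set of $e_F$ with $\lcm\{g_j:j\in F\}$ dividing $m$.

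The one genuine error sits in the step you dismissed as ``expository'': the identification $\betti_{i,m}(S/M)=\dim_k H_i\bigl((\mathbb{T}_M)_m\bigr)$ is false. Since $\mathbb{T}_M$ is a resolution and passing to the degree-$m$ graded piece is exact, the strand $(\mathbb{T}_M)_m$ has vanishing homology in all positive homological degrees, so its homology cannot compute Betti numbers. The object you need is the degree-$m$ strand of $\mathbb{T}_M\otimes_S k$, whose $i$-th term has basis $\{e_F: |F|=i,\ \lcm\{g_j:j\in F\}=m\}$ (equality, not divisibility, because $S(-a_F)\otimes_S k$ is a copy of $k$ concentrated in degree $a_F$) and whose differential keeps only the unit coefficients, i.e.\ those $j$ with $\lcm(F\setminus\{j\})=\lcm(F)$. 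Your key observation then still applies verbatim: $\lcm\{g_j:j\in F\}=m$ forces every $g_j$ with $j\in F$ to divide $m$, hence $F\subseteq G_m$, so $(\mathbb{T}_M\otimes_S k)_m=(\mathbb{T}_{M_m}\otimes_S k)_m$ as complexes of $k$-vector spaces and $\betti_{i,m}(S/M)=\dim_k H_i(\mathbb{T}_M\otimes_S k)_m=\dim_k H_i(\mathbb{T}_{M_m}\otimes_S k)_m=\betti_{i,m}(S/M_m)$. With that correction the proof is complete. (For comparison, the route in [GHP] and most texts uses the Koszul complex instead: $\betti_{i,m}(S/M)=\dim_k\tilde H_{i-2}(\Delta^m(M);k)$, where $\Delta^m(M)=\{\sigma: m/x^\sigma\in M\}$, and $m/x^\sigma\in M$ if and only if $m/x^\sigma\in M_m$ because any generator dividing $m/x^\sigma$ divides $m$; this avoids the strand bookkeeping entirely.)
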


\begin{construction}\label{Construction 2}
Let $M$ be minimally generated by $G$, and consider a multidegree $m$ of $\mathbb{T}_M$. Let $M_m = (m_1,\ldots,m_q)$ be the ideal minimally generated by all monomials of $G$ dividing $m$. \\
If $m_i =x_1^{\alpha_{i1}}.\ldots.x_4^{\alpha_{i4}}$, with $1\leq i\leq q$, then 
$m= x_1^{\alpha_1}.\ldots.x_4^{\alpha_4}$, with $\alpha_j = \Max(\alpha_{1j},\ldots,\alpha_{qj})$. \\
For each $i=1,\ldots,q$, define 
\[m'_i = x_1^{\beta_{i1}}.\ldots.x_4^{\beta_{i4}} \text{, where }\beta_{ij} = \begin{cases}
														\alpha_j &\text{ if } \alpha_{ij} = \alpha_j,\\
															0 &\text{ otherwise}.
														\end{cases}\]
Let $M'_m = (m'_1,\ldots,m'_q)$. The ideal $M'_m$ will be referred to as the \textbf{twin ideal} of $M_m$. For future reference, the minimal generating sets of  $M_m$ and $M'_m$ will be denoted by $G_m$ and $G'_m$, respectively.
\end{construction}

\begin{example}\label{Example 3}
Let $M = (x_1^3,x_1^2x_2^2,x_3^2x_4^2,x_1^2 x_2 x_3,x_2 x_3 x_4^2)$. Consider the multidegree $m = x_1^3 x_2^2 x_3 x_4^2$ of $\mathbb{T}_M$. Then $M_m = (x_1^3,x_1^2 x_2^2,x_1^2x_2x_3,x_2 x_3 x_4^2)$, and $M'_m = (x_1^3,x_2^2,x_3,x_3 x_4^2) = (x_1^3,x_2^2,x_3)$. This example shows that even if $\{m_1,\ldots,m_q\}$ is the minimal generating set of $M_m$, $\{m'_1,\ldots,m'_q\}$ may not be the minimal generating set of $M'_m$.
\end{example}

\begin{construction}\label{Construction 4}
We continue to use the notation of Construction \ref{Construction 2}. Let $y_1 = x_1^{\alpha_1},\ldots,y_4 = x_4^{\alpha_4}$, and denote by $\{ \alpha_{j_1}, \ldots ,\alpha_{j_k} \}$ the set of all nonzero exponents $\alpha_i$.  Define $y_m = y_{j_1} \ldots  y_{j_k}$,  and $T = k[y_1,\ldots,y_4]$.\\
For each $i = 1,\ldots,q$, let
\[m''_i = y_1^{\delta_{i1}}.\ldots.y_4^{\delta_{i4}}, \text{ where }\delta_{ij} = \begin{cases}
															1 & \text{if } \beta_{ij} = \alpha_j,\\
															0 & \text{otherwise}.
															\end{cases}\]
The squarefree ideal $M''_m = (m''_1,\ldots,m''_q)$ will be called the \textbf{squarefree twin ideal} of $M_m$. The minimal generating set of $M''_m$ will be denoted by $G''_m$. Note that the difference between $M'_m$ and $M''_m$ is only psychological, as $m''_i$ is just another representation of $m'_i$. Also, since $y_m = y_{j_1}\ldots y_{j_k} = x_{j_1}^{\alpha_{j_1}}\ldots x_{j_k}^{\alpha_{j_k}} = m$, it follows that $\betti_{i,m}(S/M'_m) = \betti_{i,y_m}(T/M''_m)$.
\end{construction}

\begin{example}\label{Example 5}
In Example \ref{Example 3}, $m = x_1^3 x_2^2 x_3 x_4^2$, and $M'_m = (x_1^3,x_2^2,x_3)$. According to Construction \ref{Construction 4}, $y_1 = x_1^3$, $y_2 = x_2^2$, $y_3 = x_3$, $y_4 = x_4^2$; $y_m=y_1 y_2 y_3 y_4$; and $M''_m = (y_1,y_2,y_3)$.
\end{example}

\begin{note}\label{Note 6} $M''_m$ is well defined because each variable $x_i$ appears with the same nonzero exponent in the factorization of all generators of $M'_m$ that are divisible by $x_i$. This is an important property of twin ideals without which the concept of squarefree twin ideal would not make sense. For instance, if there were a twin ideal of the form $M'_m=(x_1x_2,x_2^2x_3^2,x_3x_4^3)$, then we would have $y_1=x_1$, $y_2=x_2^2$, $y_3=x_3^2$, and $y_4=x_4^3$; and it would not be possible to represent the generator $x_1x_2$ in terms of the variables $y_i$.
\end{note}

 The next theorem, whose proof relies on a sequence of technical lemmas, can be found in [Al2, Theorem 4.10].

\begin{theorem}\label{Theorem 7}
Let $m$ be a multidegree of $\mathbb{T}_M$. Then $\betti_{i,m}(S/M_m) = \betti_{i,m}(S/M'_m)$, for all $i$.
\end{theorem}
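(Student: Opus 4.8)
\textbf{Proof proposal for Theorem \ref{Theorem 7}.}

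The plan is to construct an explicit isomorphism of multigraded chain complexes between the relevant multigraded strands of $\mathbb{T}_{M_m}$ and $\mathbb{T}_{M'_m}$, and then invoke Theorem \ref{Theorem 1} to cut down to the subideals generated by divisors of $m$. The starting observation is that there is a natural bijection $m_i \leftrightarrow m'_i$ between the generators appearing in Construction \ref{Construction 2}, and that this bijection is compatible with least common multiples in the following sense: for a subset $\sigma \subseteq \{1,\dots,q\}$, one has $\lcm\{m'_i : i \in \sigma\}$ divides $m$ if and only if $\lcm\{m_i : i \in \sigma\}$ divides $m$, and moreover when this happens the two lcms "agree on $m$", i.e. they select exactly the same set of coordinates $j$ at which the exponent equals $\alpha_j$. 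I would make this precise with a short lemma: for each $j$, the exponent of $x_j$ in $\lcm\{m_i : i \in \sigma\}$ equals $\alpha_j$ precisely when some $i \in \sigma$ has $\alpha_{ij} = \alpha_j$, which by the definition of $\beta_{ij}$ is exactly the condition that the exponent of $x_j$ in $\lcm\{m'_i : i \in \sigma\}$ equals $\alpha_j$ (and otherwise the primed lcm has exponent $0$ in $x_j$).

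With that combinatorial dictionary in hand, I would consider the multigraded components of the two Taylor complexes in multidegrees dividing $m$. A basis element of $\mathbb{T}_{M_m}$ in homological degree $i$ is a subset $\sigma$ with $|\sigma| = i$ and multidegree $\lcm\{m_j : j \in \sigma\}$; the subcomplex spanned by those $\sigma$ whose multidegree divides $m$ is exactly the part that computes $\betti_{i,m'}(S/M_m)$ for multidegrees $m'$ dividing $m$, and similarly on the primed side. The map sending $e_\sigma \mapsto e_\sigma$ (same index set, reinterpreted as a face for $M'_m$) is then a degree-preserving, sign-compatible isomorphism between these two truncated complexes, because the Taylor differential depends only on the poset of lcms, which we have just shown is identical on the two sides once restricted to faces with lcm dividing $m$. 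Hence the homology in each multidegree $m'$ dividing $m$ agrees, giving $\betti_{i,m'}(S/M_m) = \betti_{i,m'}(S/M'_m)$; specializing to $m' = m$ yields the theorem. One must also check that a multidegree $m'$ not dividing $m$ contributes nothing of interest — but here Theorem \ref{Theorem 1} does the work: $\betti_{i,m}(S/M_m)$ depends only on the subideal generated by divisors of $m$, and likewise for $M'_m$, so truncating to faces with lcm dividing $m$ loses nothing when we ultimately read off the coefficient at $m$.

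The main obstacle, and the reason the argument is more delicate than it first appears, is the phenomenon flagged in Example \ref{Example 3}: the list $m'_1, \dots, m'_q$ need not be the \emph{minimal} generating set of $M'_m$, so $\mathbb{T}_{M'_m}$ built on $G'_m$ and the Taylor complex built on the (possibly redundant) list $m'_1,\dots,m'_q$ are a priori different complexes. I would handle this by passing through the redundant Taylor complex $\mathbb{T}(m'_1,\dots,m'_q)$ as an intermediary: the index-set bijection gives an honest isomorphism between the truncation of $\mathbb{T}_{M_m}$ and the truncation of $\mathbb{T}(m'_1,\dots,m'_q)$, and then a separate, standard argument shows that deleting a redundant generator from a Taylor complex does not change the multigraded Betti numbers (the redundant generator $m'_i$ is divisible by some other $m'_\ell$, and one performs the usual mapping-cone/acyclic-subcomplex reduction, or simply cites the comparison of $\mathbb{T}(m'_1,\dots,m'_q)$ with $\mathbb{T}_{M'_m}$ as resolutions of the same module $S/M'_m$). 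Care is needed to confirm that this reduction is compatible with the restriction to multidegrees dividing $m$ — concretely, that a redundant $m'_i$ together with the $m'_\ell$ dividing it still has lcm dividing $m$ — which again follows from the coordinatewise description of the primed generators. Assembling these pieces gives the stated equality for all $i$.
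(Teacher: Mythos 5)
The paper does not actually prove Theorem \ref{Theorem 7}; it cites [Al2, Theorem 4.10], so your proposal has to be judged as a self-contained argument, and in essence it works. The heart of it --- the coordinatewise dictionary showing that for every $\sigma\subseteq\{1,\dots,q\}$ one has $\lcm\{m_i:i\in\sigma\}=m$ if and only if $\lcm\{m'_i:i\in\sigma\}=m$ --- is exactly the right lemma, and applying it to both $\sigma$ and $\sigma\setminus\{j\}$ shows that the multidegree-$m$ strands of $\mathbb{T}_{M_m}\otimes k$ and of the Taylor complex on the (possibly redundant) list $m'_1,\dots,m'_q$ are isomorphic complexes of $k$-vector spaces via the identity on index sets. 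Since the Taylor complex on any generating list, minimal or not, is a free resolution of the quotient by the ideal it generates, its degree-$m$ homology computes $\betti_{i,m}(S/M'_m)$; your worry about redundant generators is dispatched by the clean citation you mention, with no mapping-cone reduction needed.

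Two intermediate claims are false as stated, though neither is needed for the conclusion. First, $e_\sigma\mapsto e_\sigma$ is not degree-preserving away from multidegree $m$, and the ``posets of lcms'' of the two complexes are not identical even after your truncation (which is in any case vacuous, since every face on either side has lcm dividing $m$): in Example \ref{Example 3}, $\lcm(x_1^2x_2^2,\,x_1^2x_2x_3)=x_1^2x_2^2x_3$ while the corresponding primed lcm is $\lcm(x_2^2,x_3)=x_2^2x_3$. Second, and consequently, the assertion that $\betti_{i,m'}(S/M_m)=\betti_{i,m'}(S/M'_m)$ for every $m'$ dividing $m$ is wrong: in that same example $\betti_{2,x_1^2x_2^2x_3}(S/M_m)=1$ but $\betti_{2,x_1^2x_2^2x_3}(S/M'_m)=0$. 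What your dictionary actually gives is agreement of the strands in multidegree exactly $m$ --- for each $j$ the exponent of $x_j$ in the two lcms is either $\alpha_j$ on both sides or strictly less than $\alpha_j$ on the unprimed side and $0$ on the primed side --- and that is all the theorem requires. The closing appeal to Theorem \ref{Theorem 1} is likewise out of place (that theorem relates $M$ to $M_m$, not $M_m$ to $M'_m$). If you rewrite the argument to work from the outset only with the degree-$m$ strands, it becomes a complete and notably short proof of a statement the paper outsources to a ``sequence of technical lemmas'' in [Al2].
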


\begin{corollary}\label{Corollary 8}
Let $m$ be a multidegree of $\mathbb{T}_M$. Then $\betti_{i,m}(S/M) = \betti_{i,y_m}(T/M''_m)$, for all $i$.
\end{corollary}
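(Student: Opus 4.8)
The plan is to obtain Corollary \ref{Corollary 8} by simply chaining together the three results that immediately precede it in this section, so the proof is a short composition argument rather than anything requiring new ideas. First I would invoke Theorem \ref{Theorem 1} (the Gasharov--Hibi--Peeva result): since $m$ is a multidegree of $\mathbb{T}_M$ and $M_m$ is the ideal generated by all elements of $G$ dividing $m$, we get $\betti_{i,m}(S/M) = \betti_{i,m}(S/M_m)$ for all $i$. Next I would apply Theorem \ref{Theorem 7}, which hands us $\betti_{i,m}(S/M_m) = \betti_{i,m}(S/M'_m)$ for all $i$, where $M'_m$ is the twin ideal from Construction \ref{Construction 2}. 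Finally, the last sentence of Construction \ref{Construction 4} records the identity $\betti_{i,m}(S/M'_m) = \betti_{i,y_m}(T/M''_m)$, which holds because $y_m = m$ as monomials and $M''_m$ is merely a re-encoding of $M'_m$ in the variables $y_1,\dots,y_4$ (this re-encoding is legitimate precisely by the well-definedness observed in Note \ref{Note 6}). Stringing these three equalities together yields $\betti_{i,m}(S/M) = \betti_{i,y_m}(T/M''_m)$ for all $i$, which is exactly the claim.

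There is essentially no obstacle here: each of the three links in the chain is either quoted verbatim from the literature ([GHP] and [Al2]) or established in the constructions above. The one point that deserves a sentence of care is making sure the hypotheses line up — in particular, that $m$ being a multidegree of $\mathbb{T}_M$ is exactly the hypothesis needed for both Theorem \ref{Theorem 1} and Theorem \ref{Theorem 7}, and that the passage $M'_m \rightsquigarrow M''_m$ does not disturb the Betti numbers in the relevant multidegree. If anything is worth spelling out, it is the bookkeeping that $y_m$ really equals $m$, so that the multidegree in which we read off the Betti number on the right-hand side genuinely corresponds to the multidegree $m$ on the left; but this is already noted in Construction \ref{Construction 4}. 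I would therefore write the proof as a three-line display of equalities with a pointer to each source, and no more.
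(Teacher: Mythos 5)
Your proposal is correct and matches the paper's own proof exactly: the paper also chains Theorem \ref{Theorem 1}, Theorem \ref{Theorem 7}, and the identity $\betti_{i,m}(S/M'_m) = \betti_{i,y_m}(T/M''_m)$ from Construction \ref{Construction 4} into a single display of equalities. No further comment is needed.
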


\begin{proof}
By Theorems \ref{Theorem 1} and \ref{Theorem 7}, 
\[\betti_{i,m}(S/M) = \betti_{i,m}(S/M_m) = \betti_{i,m}(S/M'_m) = \betti_{i,y_m}(T/M''_m).\]
\end{proof} 

\begin{corollary}\label{Corollary 9}
$\betti_i(S/M) = \sum\limits_{m\in \mathbb{T}_M} \betti_{i,y_m}(T/M''_m)$.
\end{corollary}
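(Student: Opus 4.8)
The plan is to recover the total Betti number $\betti_i(S/M)$ by summing its multigraded pieces and then applying Corollary~\ref{Corollary 8} termwise. The starting point is the standard fact that $\betti_i(S/M) = \sum_m \betti_{i,m}(S/M)$, where the sum ranges over all multidegrees $m$. The first step is to argue that this sum may be restricted to the multidegrees occurring in the Taylor resolution $\mathbb{T}_M$: indeed, $\mathbb{T}_M$ is a (generally non-minimal) free resolution of $S/M$, so every multigraded Betti number $\betti_{i,m}(S/M)$ is computed as the dimension of a subquotient of $(\mathbb{T}_M)_i$ in multidegree $m$; if $m$ is not a multidegree of $\mathbb{T}_M$, then $(\mathbb{T}_M \otimes k)_{i,m} = 0$ for all $i$, and hence $\betti_{i,m}(S/M) = 0$. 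Therefore $\betti_i(S/M) = \sum_{m \in \mathbb{T}_M} \betti_{i,m}(S/M)$.

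The second step is purely bookkeeping: for each multidegree $m \in \mathbb{T}_M$, apply Corollary~\ref{Corollary 8}, which gives $\betti_{i,m}(S/M) = \betti_{i,y_m}(T/M''_m)$. Substituting this into the sum from the first step yields
\[
\betti_i(S/M) \;=\; \sum_{m \in \mathbb{T}_M} \betti_{i,m}(S/M) \;=\; \sum_{m \in \mathbb{T}_M} \betti_{i,y_m}(T/M''_m),
\]
which is exactly the claimed identity. Note that the constructions $M''_m$, $T$, and $y_m$ all depend on the chosen multidegree $m$, so the summand on the right genuinely varies with $m$; this is consistent, since Corollary~\ref{Corollary 8} is applied one $m$ at a time.

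I expect the only genuine subtlety to be the justification in the first step that the sum can be taken over $m \in \mathbb{T}_M$ rather than over all multidegrees of $S$ — in other words, confirming that the multidegrees appearing with nonzero Betti number are a subset of those appearing in the Taylor resolution. This is immediate from the existence of $\mathbb{T}_M$ as a multigraded free resolution together with the fact that $\betti_{i,m}$ can be computed from any free resolution as the rank of the degree-$m$ part of its minimalization, but it is worth stating explicitly since the indexing set of the sum is the whole content of the corollary beyond Corollary~\ref{Corollary 8}. Everything else is a direct substitution.
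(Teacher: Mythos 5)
Your proposal is correct and follows the same route as the paper: decompose $\betti_i(S/M)$ into multigraded pieces indexed by the multidegrees of $\mathbb{T}_M$ and apply Corollary~\ref{Corollary 8} termwise. The paper states the first equality without comment, whereas you justify it (multidegrees not occurring in the Taylor resolution contribute zero Betti numbers); this is a harmless and reasonable addition, not a different argument.
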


\begin{proof}
By Corollary \ref{Corollary 8}, 
\[\betti_i(S/M) = \sum\limits_{m\in \mathbb{T}_M} \betti_{i,m}(S/M) = \sum\limits_{m\in \mathbb{T}_M} \betti_{i,y_m}(T/M''_m).\]
\end{proof}

\section{Squarefree ideals in 4 variables}\label{Section 2}

Corollary \ref{Corollary 9} says that the Betti numbers of $M$ can be expressed in terms of the multigraded Betti numbers of the squarefree twin ideals $M''_m$, with $m\in \mathbb{T}_M$. Since the number of squarefree ideals in 4 variables is finite (and small enough to list them one by one), the study of Betti numbers of an infinite family can be reduced to the study of the Betti numbers of a handful of them. With this purpose, we now describe all squarefree ideals in 4 variables.

We will express the class of all squarefree ideals $\mathscr{M}$ as the disjoint union $\bigcup\limits_{i=0}^4 \mathscr{M}_i$, where $\mathscr{M}_i$ is the class of all squarefree ideals for which the largest degree of a minimal generator is $i$. Below, we describe the elements of each $\mathscr{M}_i$. \\
0) $\mathscr{M}_0: \overbracket{(1)}^{\#\textbf{1}} = S$.\\
1) Let $M$ be an arbitrary ideal in $\mathscr{M}_1$. Then $M = (x_{i_1},\ldots,x_{i_r})$, with $1\leq i_1<\ldots<i_r\leq 4$. After making the change of variables $y_1=x_{i_1},\ldots,y_r = x_{i_r}$, $M$ can be expressed as one of the following ideals:\\
$\overbracket{(y_1)}^{\#\textbf{2}}$ $ \overbracket{(y_1,y_2)}^{\#\textbf{3}}$  $\overbracket{(y_1,y_2,y_3)}^{\#\textbf{4}}$ $ \overbracket{(y_1,y_2,y_3,y_4)}^{\#\textbf{5}}$.\\
2) Let $M$ be an arbitrary ideal in $\mathscr{M}_2$. Then, $M$ is of the form
\[(x_{i_1} x_{j_1},\ldots,x_{i_r} x_{j_r},x_{k_1},\ldots,x_{k_s}).\]
Let $ij$ be the first element among $i_1j_1,\ldots,i_rj_r$ in lexicographic order. Let $y_1 = x_i$, and $y_2 = x_j$. Let $\{x_k,x_l\} = \{x_1,\ldots,x_4\}\setminus \{x_i,x_j\}$, where $k<l$. If $x_k$ does not appear in the factorization of any of the minimal generators of $M$, define $y_3 = x_l$. Otherwise, let $y_3 = x_k$ and $y_4 = x_l$. Then, $M$ is one of the following ideals:
\begin{itemize}[leftmargin=*]
\item Ideals with exactly one minimal generator: \[\overbracket{(y_1 y_2)}^{\#\textbf{6}}\]
\item Ideals with exactly two minimal generators: 
\[\overbracket{(y_1 y_2,y_1y_3)}^{\#\textbf{7}}\: \overbracket{(y_1y_2,y_1y_4)}^{\#\textbf{8}}\: \overbracket{(y_1y_2,y_2y_3)}^{\#\textbf{9}}\: \overbracket{(y_1y_2,y_2y_4)}^{\#\textbf{10}}\: \overbracket{(y_1y_2,y_3y_4)}^{\#\textbf{11}} \:
\overbracket{(y_1y_2,y_3)}^{\#\textbf{12}}\: \overbracket{(y_1y_2,y_4)}^{\#\textbf{13}}\]
\item Ideals with exactly three minimal generators: 
\[\overbracket{(y_1y_2,y_1y_3,y_1y_4)}^{\#\textbf{14}}\quad \overbracket{(y_1y_2,y_1y_3,y_2y_3)}^{\#\textbf{15}} \quad \overbracket{(y_1y_2,y_1y_3,y_2y_4)}^{\#\textbf{16}} \quad \overbracket{(y_1y_2,y_1y_3,y_3y_4)}^{\#\textbf{17}}\]
\[\overbracket{(y_1y_2,y_1y_4,y_2y_3)}^{\#\textbf{18}} \quad \overbracket{(y_1y_2,y_1y_4,y_2y_4)}^{\#\textbf{19}} \quad \overbracket{(y_1y_2,y_1y_4,y_3y_4)}^{\#\textbf{20}} \quad \overbracket{(y_1y_2,y_2y_3,y_2y_4)}^{\#\textbf{21}}\]
\[ \overbracket{(y_1y_2,y_2y_3,y_3y_4)}^{\#\textbf{22}}\quad \overbracket{(y_1y_2,y_2y_4,y_3y_4)}^{\#\textbf{23}} \quad \overbracket{(y_1y_2,y_1y_3,y_4)}^{\#\textbf{24}} \quad \overbracket{(y_1y_2,y_2y_3,y_4)}^{\#\textbf{25}} \]
\[ \overbracket{(y_1y_2,y_1y_4,y_3)}^{\#\textbf{26}} \quad \overbracket{(y_1y_2,y_2y_4,y_3)}^{\#\textbf{27}} \quad \overbracket{(y_1y_2,y_3,y_4)}^{\#\textbf{28}}\]
\item Ideals with exactly four minimal generators:
\[\overbracket{(y_1y_2,y_1y_3,y_1y_4,y_2y_3)}^{\#\textbf{29}} \quad \overbracket{(y_1y_2,y_1y_3,y_1y_4,y_2y_4)}^{\#\textbf{30}} \quad \overbracket{(y_1y_2,y_1y_3,y_1y_4,y_3y_4)}^{\#\textbf{31}},\]
\[\overbracket{(y_1y_2,y_1y_3,y_2y_3,y_2y_4)}^{\#\textbf{32}}\quad
 \overbracket{(y_1y_2,y_1y_3,y_2y_3,y_3y_4)}^{\#\textbf{33}} \quad \overbracket{(y_1y_2,y_1y_3,y_2y_4,y_3y_4)}^{\#\textbf{34}}\]
\[\overbracket{(y_1y_2,y_1y_4,y_2y_3,y_2y_4)}^{\#\textbf{35}} \quad \overbracket{(y_1y_2,y_1y_4,y_2y_3,y_3y_4)}^{\#\textbf{36}} \quad \overbracket{(y_1y_2,y_1y_4,y_2y_4,y_3y_4)}^{\#\textbf{37}} \]
\[\overbracket{(y_1y_2,y_2y_3,y_2y_4,y_3y_4)}^{\#\textbf{38}} \quad \overbracket{(y_1y_2,y_1y_3,y_2y_3,y_4)}^{\#\textbf{39}} \quad \overbracket{(y_1y_2,y_1y_4,y_2y_4,y_3)}^{\#\textbf{40}}\]
\item Ideals with exactly five minimal generators:\\
\[\overbracket{(y_1y_2,y_1y_3,y_1y_4,y_2y_3,y_2y_4)}^{\#\textbf{41}}\quad \overbracket{(y_1y_2,y_1y_3,y_1y_4,y_2y_3,y_3y_4)}^{\#\textbf{42}}\quad \overbracket{(y_1y_2,y_1y_3,y_1y_4,y_2y_4,y_3y_4)}^{\#\textbf{43}}\]
\[\overbracket{(y_1y_2,y_1y_3,y_2y_3,y_2y_4,y_3y_4)}^{\#\textbf{44}}\quad \overbracket{(y_1y_2,y_1y_4,y_2y_3,y_2y_4,y_3y_4)}^{\#\textbf{45}}\]
\item Ideals with exactly six minimal generators:
\[\overbracket{(y_1y_2,y_1y_3,y_1y_4,y_2y_3,y_2y_4,y_3y_4)}^{\#\textbf{46}}\]
3) Let $M$ be an arbitrary ideal in $\mathscr{M}_3$. Then $M$ is of the form $M = (x_{i_1}x_{j_1}x_{k_1},\ldots,x_{i_r}x_{j_r}x_{k_r},$ monomials of degree less than $3)$.\\
Let $ijk$ be the first element among $i_1j_1k_1,\ldots,x_rj_rk_r$ in lexicographic order. Let $y_1 = x_i$, $y_2 = x_j$, $y_3 = x_k$, and $y_4 = x$, where $\{x\} = \{x_1,\ldots,x_4\}\setminus \{x_i,x_j,x_k\}$. Then $M$ is one of the following ideals:
\item Ideals with exactly one minimal generator:
\[\overbracket{(y_1 y_2 y_3)}^{\#\textbf{47}}\]
\item Ideals with exactly two minimal generators:
\[\overbracket{(y_1y_2y_3,y_1y_2y_4)}^{\#\textbf{48}} \: \overbracket{(y_1y_2y_3,y_1y_3y_4)}^{\#\textbf{49}}\: \overbracket{(y_1y_2y_3,y_2y_3y_4)}^{\#\textbf{50}}\: \overbracket{(y_1y_2y_3,y_1y_4)}^{\#\textbf{51}}\: \overbracket{(y_1y_2y_3,y_2y_4)}^{\#\textbf{52}}\:
 \overbracket{(y_1y_2y_3,y_3y_4)}^{\#\textbf{53}}\:  \]
\[\overbracket{(y_1y_2y_3,y_4)}^{\#\textbf{54}}\]
\item Ideals with exactly three minimal generators:
\[ \overbracket{(y_1y_2y_3,y_1y_2y_4,y_1y_3y_4)}^{\#\textbf{55}}\:  \overbracket{(y_1y_2y_3,y_1y_2y_4,y_2y_3y_4)}^{\#\textbf{56}}\:  \overbracket{(y_1y_2y_3,y_1y_3y_4,y_2y_3y_4)}^{\#\textbf{57}}\:  \overbracket{(y_1y_2y_3,y_1y_2y_4,y_3y_4)}^{\#\textbf{58}}\]
\[ \overbracket{(y_1y_2y_3,y_1y_3y_4,y_2y_4)}^{\#\textbf{59}}\:  \overbracket{(y_1y_2y_3,y_2y_3y_4,y_1y_4)}^{\#\textbf{60}}\:  \overbracket{(y_1y_2y_3,y_1y_4,y_2y_4)}^{\#\textbf{61}}\:  \overbracket{(y_1y_2y_3,y_1y_4,y_3y_4)}^{\#\textbf{62}}\]
\[ \overbracket{(y_1y_2y_3,y_2y_4,y_3y_4)}^{\#\textbf{63}}\]
\item Ideals with exactly  four minimal generators:
\[ \overbracket{(y_1y_2y_3,y_1y_2y_4,y_1y_3y_4,y_2y_3y_4)}^{\#\textbf{64}}\: \overbracket{(y_1y_2y_3,y_1y_4,y_2y_4,y_3y_4)}^{\#\textbf{65}}\:\]
\end{itemize}
4) $\mathscr{M}_4: \: \overbracket{(y_1y_2y_3y_4)}^{\#\textbf{66}}\:$, (where $y_1 = x_1,\ldots,y_4 = x_4$).

This class of 66 ideals could be reduced to an even smaller family by relabeling the subscripts of the variables. For instance, by relabeling the variables, $\#41,\ldots,\#45$ could be regarded as the same ideal. However, 66 ideals is a number that we can handle and, paradoxically, making any further reductions could lead to complications.

\section{Characteristic independence in 4 variables}
Before giving a formula for the Betti numbers of a monomial ideal in 4 variables, we will show that Betti numbers in 4 variables are independent of the base field.

\begin{theorem}\label{Theorem 10}
The Betti numbers of monomial ideals in 4 variables are characteristic-independent.
\end{theorem}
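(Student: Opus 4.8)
The plan is to reduce everything to Corollary \ref{Corollary 9} and then feed it into Hochster's formula. By Corollary \ref{Corollary 9} we have $\betti_i(S/M)=\sum_{m\in\mathbb{T}_M}\betti_{i,y_m}(T/M''_m)$, and the right-hand side is manufactured from $M$ by purely combinatorial operations: the Taylor resolution $\mathbb{T}_M$ has a $\mathbb{Z}$-basis indexed by the subsets $D$ of the minimal generating set of $M$, the basis element for $D$ sitting in homological degree $|D|$ with multidegree $\lcm D$, so the index set $\{m\in\mathbb{T}_M\}$ and each of the ideals $M_m,M'_m,M''_m$ are determined by $M$ alone and never see the field. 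Hence it suffices to show that the multigraded Betti numbers of a squarefree monomial ideal in four variables are characteristic-independent; and since, after relabeling the variables, every such ideal is one of the $66$ ideals $\#\mathbf{1},\ldots,\#\mathbf{66}$ of Section \ref{Section 2}, it is even enough to treat those.

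Next I would translate this into a statement about simplicial homology. Fix a squarefree monomial ideal $I\subseteq T=k[y_1,\ldots,y_4]$ and write $T/I=k[\Delta]$, where $\Delta$ is the Stanley--Reisner complex of $I$ on the vertex set $\{1,2,3,4\}$. By Hochster's formula, $\betti_{i,\mathbf{a}}(k[\Delta])=0$ unless $\mathbf{a}$ is the characteristic vector of a subset $\sigma\subseteq\{1,2,3,4\}$, in which case $\betti_{i,\sigma}(k[\Delta])=\dim_k\widetilde{H}_{|\sigma|-i-1}(\Delta|_\sigma;k)$, where $\Delta|_\sigma$ is the restriction of $\Delta$ to $\sigma$. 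The multidegrees $y_m$ that occur in Corollary \ref{Corollary 9} are products of distinct variables $y_j$, so they are exactly of this squarefree type, and the whole computation of $\betti_i(S/M)$ comes down to computing reduced homology of the complexes $\Delta|_\sigma$ — each of which has at most four vertices.

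The heart of the argument is then the elementary topological fact that a simplicial complex on at most four vertices has torsion-free integral reduced homology. Such a complex is a subcomplex of the full $3$-simplex on $\{1,2,3,4\}$, and a direct inspection (equivalently, the Smith normal forms of the boundary maps $\partial_1,\partial_2,\partial_3$ of the $3$-simplex, whose invariant factors are all $1$, so the same is forced for every submatrix) shows that its reduced homology is concentrated in degrees $0,1,2$ and is free abelian in each. By the universal coefficient theorem this gives $\dim_k\widetilde{H}_j(\Delta|_\sigma;k)=\rank\widetilde{H}_j(\Delta|_\sigma;\mathbb{Z})$ for every field $k$, so $\betti_{i,\sigma}(k[\Delta])$ is independent of $k$; hence so is each $\betti_{i,y_m}(T/M''_m)$, and therefore so is $\betti_i(S/M)$.

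The step I expect to be the main obstacle is making the torsion-freeness claim airtight. I would handle it in one of two ways: either cite the classical fact that the smallest simplicial complex whose homology has torsion is a $6$-vertex triangulation of $\mathbb{RP}^2$, so nothing on four vertices can have torsion; or — since Section \ref{Section 2} already puts the $66$ ideals on the table — simply verify characteristic-independence one ideal at a time, by exhibiting an explicit $\mathbb{Z}$-free resolution (or the list of nonzero reduced homology groups) in each case. Many of the $66$ ideals are dominant, so by Theorem \ref{Taylor} their Taylor resolutions are already minimal and their Betti numbers are visibly subset counts; the remaining ones are small enough that the computation is routine, and it is in any event the same bookkeeping that underlies the explicit Betti-number formula of Section 6.
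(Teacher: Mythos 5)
Your proof is correct, but it takes a genuinely different route from the paper after the common first step. Both arguments begin with Corollary \ref{Corollary 9}, which reduces everything to the multigraded Betti numbers of squarefree ideals in four variables. From there the paper works through the 66 ideals of Section \ref{Section 2} case by case, using a different tool for each block: minimality of the Taylor resolution for the dominant ideals (Theorem \ref{Taylor}), explicit consecutive cancellations for the nondominant 3-generated ideals and for $\#65$, the Katzman and Terai--Hibi results for the quadratic (edge) ideals, the structural decomposition of [Al1] for the 4-generated ideals with a dominant generator, and the Veronese property for $\#64$. You instead give a uniform argument: Hochster's formula turns each multigraded Betti number of a squarefree ideal in four variables into the $k$-dimension of a reduced homology group of a simplicial complex on at most four vertices, such complexes have torsion-free integral homology, and the universal coefficient theorem then makes these dimensions independent of $k$. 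This is shorter, bypasses the enumeration of the 66 ideals entirely, and is arguably stronger: since the reduction of Section 3 is valid in any number of variables and five-vertex complexes are still torsion-free (torsion first appears on six vertices, via the minimal triangulation of $\mathbb{RP}^2$), the same argument would settle characteristic independence in five variables, a question the paper leaves open in its final section. The one thing to fix is your parenthetical Smith-normal-form justification: it is not true that a submatrix of a matrix whose invariant factors are all $1$ must itself have unit invariant factors (the entry $2$ sits inside $\left(\begin{smallmatrix}2&1\\1&1\end{smallmatrix}\right)$, which has determinant $1$), and the boundary matrix of a subcomplex is exactly such a submatrix --- indeed, if this principle were valid, no subcomplex of a simplex could ever have torsion, which is false from six vertices on. So drop that remark and rely on the classical six-vertex fact, or on the finite direct verification, both of which you already offer as alternatives.
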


\begin{proof}
Let $M$ be a monomial ideal of $S$. By Corollary \ref{Corollary 9}, $\betti_i(S/M) = \sum\limits_{m\in \mathbb{T}_M} \betti_{i,y_m}(T/M''_m)$. Thus, it is enough to prove that for each $m\in \mathbb{T}_M$, $\betti_{i,y_m}(T/M''_m)$ is characteristic-independent. Since $M''_m$ can be regarded as one of the 66 ideals listed in Section \ref{Section 2}, we will consider each case in particular. First, if $M''_m$ is a dominant ideal, minimally generated by at most three monomials then, according to Theorem \ref{Taylor}, $M''_m$ is minimally resolved by the Taylor resolution. Therefore, $\betti_{i,y_m}(T,M''_m)$ is characteristic-independent. On the other hand, if $M''_m$ is a nondominant ideal minimally generated by at most three monomials, then $M''_m$ must be minimally generated by exactly three monomials (if $M''_m$ were minimally generated by at most two monomials, $M''_m$ would be dominant), and its minimal resolution is obtained from its Taylor resolution by means of exactly one consecutive cancellation between one basis element in homological degree 2, and the only basis element in homological degree 3. Thus, $\betti_{i,y_m}(T/M''_m)$ is characteristic-independent. We have proven that the Betti numbers of the ideals $\#1,\ldots,\#28,\#47,\ldots,\#63,\#66$ in Section 4 are characteristic-independent.\\
Suppose now that $M''_m$ is minimally generated by four monomials. Then $M''_m$ can be regarded as one of $\#29,\ldots,\#40,\#64,\#65$. If $M''_m$ is one of $\#34,\#36$, then $M''_m$ is quadratic, and $T/M''_m$ is a Stanley-Reisner ring. Hence, $\betti_{i,y_m}(T/M''_m)$ is characteristic-independent [Ka, TH]. In addition, if $M''_m$ is one of $\#29,\ldots,\#33,\#35,\#37,\ldots,\#40$, then $M''_m$ has one dominant generator. Applying the third structural decomposition [Al1], we can express $\betti_{i,y_m}(T/M''_m)$ in terms of the Betti numbers of two monomial ideals in 3 variables, but such Betti numbers are characteristic-independent [MS, Exercise 3.1]. Notice that $\#64$ is a squarefree Veronese ideal, which implies that its Betti numbers are independent of the base field.\\
Finally, if $M''_m$ is of the form $\#65$, and we regard $k$ as a field of characteristic $0$, then the minimal resolution of $T/M''_m$ can be obtained from its Taylor resolution by making 3 consecutive cancellations. It can be verified that, each step, the entries of the differential matrices have coefficients 0, 1, or -1. Therefore, $\betti_{i,y_m}(T/M''_m)$ is characteristic-independent.\\
It only remains to prove that if $M''_m$ is one of $\#1,\ldots,\#66$, and it is minimally generated by more than 4 monomials, then $\betti_{i,y_m}(T/M''_m)$ is characteristic-independent; that is, we need to consider the case when $M''_m$ is one of $\#41,\ldots,\#46$. Note that, in each case, $M''_m$ is an edge ideal, and $T/M''_m$ is a Stanley-Reisner ring. By [Ka, TH], $\betti_{i,y_m}(T/M''_m)$ is characteristic-independent. 
\end{proof}

\section{Total Betti numbers}

We remind the reader of the notation that we have adopted. If $M$ is a monomial ideal in 4 variables, and $m$ is a multidegree of its Taylor resolution, then $M_m$ is the ideal generated by those minimal generators of $M$ that divide $m$. The ideals $M'_m$, and $M''_m$ are the twin and squarefree twin ideals of $M_m$, respectively. Finally, $G$, $G_m$, $G'_m$, and $G''_m$ are the minimal generating sets of $M$, $M_m$, $M'_m$, and $M''_m$, respectively.

The following table gives the multigraded Betti numbers $\betti_{2,y_m}(T/M''_m)$, and $\betti_{3,y_m}(T/M''_m)$, when $M''_m$ is any of the 66 squarefree ideals constructed in Section 4. Our computations can be easily verified using Macaulay2 [GS].

\captionof{table}{Second and third multigraded Betti numbers}\label{table}
\[
\begin{tabular}{| c | l | c |c|c| }
  \hline			
  \# & $ M''_m\quad\quad \quad \quad \quad \quad\quad \quad \quad \quad \quad\quad\quad \quad \quad \quad \quad\quad \quad \quad \quad \quad $& $y_m$ &$\betti_{2,y_m}  $ & $\betti_{3,y_m}$ \\
\hline
  1 & $(1)$& 1 &0&0 \\
\hline
  2 &$(y_1)$&$y_1$ &0&0 \\
  \hline  
3& $(y_1,y_2)$&$y_1y_2$&1&0\\
\hline
4&$(y_1,y_2,y_3)$&$y_1y_2y_3$&0&1\\
\hline
5& $(y_1,y_2,y_3,y_4)$&$y_1y_2y_3y_4$&0&0\\
\hline
6& $(y_1y_2)$&$y_1y_2$&0&0\\
\hline
7& $(y_1y_2,y_1y_3)$&$y_1y_2y_3$&1&0\\
\hline
8& $(y_1y_2,y_1y_4)$&$y_1y_2y_4$&1&0\\
\hline
9&$(y_1y_2,y_2y_3)$&$y_1y_2y_3$&1&0\\
\hline
10&$(y_1y_2,y_2y_4)$&$y_1y_2y_4$&1&0\\
\hline
11&$(y_1y_2,y_3y_4)$&$y_1y_2y_3y_4$&1&0\\
\hline
12&$(y_1y_2,y_3)$&$y_1y_2y_3$&1&0\\
\hline
13&$(y_1y_2,y_4)$&$y_1y_2y_4$&1&0\\
\hline
14&$(y_1y_2,y_1y_3,y_1y_4)$&$y_1y_2y_3y_4$&0&1\\
\hline
15&$(y_1y_2,y_1y_3,y_2y_3)$&$y_1y_2y_3$&2&0\\
\hline

16&$(y_1y_2,y_1y_3,y_2y_4)$&$y_1y_2y_3y_4$&0&0\\
\hline
17&$(y_1y_2,y_1y_3,y_3y_4)$&$y_1y_2y_3y_4$&0&0\\
\hline
18&$(y_1y_2,y_1y_4,y_2y_3)$&$y_1y_2y_3y_4$&0&0\\
\hline
19&$(y_1y_2,y_1y_4,y_2y_4)$&$y_1y_2y_4$&2&0\\
\hline
20&$(y_1y_2,y_1y_4,y_3y_4)$&$y_1y_2y_3y_4$&0&0\\
\hline
21&$(y_1y_2,y_2y_3,y_2y_4)$&$y_1y_2y_3y_4$&0&1\\
\hline
22&$(y_1y_2,y_2y_3,y_3y_4)$&$y_1y_2y_3y_4$&0&0\\
\hline
23&$(y_1y_2,y_2y_4,y_3y_4)$&$y_1y_2y_3y_4$&0&0\\
\hline
24&$(y_1y_2,y_1y_3,y_4)$&$y_1y_2y_3y_4$&0&1\\
\hline
25&$(y_1y_2,y_2y_3,y_4)$&$y_1y_2y_3y_4$&0&1\\
\hline
26&$(y_1y_2,y_1y_4,y_3)$&$y_1y_2y_3y_4$&0&1\\
\hline
27&$(y_1y_2,y_2y_4,y_3)$&$y_1y_2y_3y_4$&0&1\\
\hline
28&$(y_1y_2,y_3,y_4)$&$y_1y_2y_3y_4$&0&1\\
\hline
\end{tabular}\]

\[\begin{tabular}{| c | l | c |c|c| }
\hline
  \# & $ M''_m\quad\quad \quad \quad \quad \quad\quad \quad \quad \quad \quad\quad\quad \quad \quad \quad \quad\quad \quad \quad \quad \quad  $&$y_m$&$\betti_{2,y_m} $ & $\betti_{3,y_m}$ \\
\hline

29&$(y_1y_2,y_1y_3,y_1y_4,y_2y_3)$&$y_1y_2y_3y_4$&0&1\\
\hline
30&$(y_1y_2,y_1y_3,y_1y_4,y_2y_4)$&$y_1y_2y_3y_4$&0&1\\
\hline
31&$(y_1y_2,y_1y_3,y_1y_4,y_3y_4)$&$y_1y_2y_3y_4$&0&1\\
\hline
32&$(y_1y_2,y_1y_3,y_2y_3,y_2y_4)$&$y_1y_2y_3y_4$&0&1\\
\hline
33&$(y_1y_2,y_1y_3,y_2y_3,y_3y_4)$&$y_1y_2y_3y_4$&0&1\\
\hline
34&$(y_1y_2,y_1y_3,y_2y_4,y_3y_4)$&$y_1y_2y_3y_4$&0&1\\
\hline
35&$(y_1y_2,y_1y_4,y_2y_3,y_2y_4) $&$y_1y_2y_3y_4$&0&1\\
\hline
36&$(y_1y_2,y_1y_4,y_2y_3,y_3y_4)$&$y_1y_2y_3y_4$&0&1\\
\hline
37&$(y_1y_2,y_1y_4,y_2y_4,y_3y_4)$&$y_1y_2y_3y_4$&0&1\\
\hline
38&$(y_1y_2,y_2y_3,y_2y_4,y_3y_4)$&$y_1y_2y_3y_4$&0&1\\
\hline
39&$(y_1y_2,y_1y_3,y_2y_3,y_4)$&$y_1y_2y_3y_4$&0&2\\
\hline
40&$(y_1y_2,y_1y_4,y_2y_4,y_3)$&$y_1y_2y_3y_4$&0&2\\
\hline

41&$(y_1y_2,y_1y_3,y_1y_4,y_2y_3,y_2y_4)$&$y_1y_2y_3y_4$&0&2\\
\hline
42&$(y_1y_2,y_1y_3,y_1y_4,y_2y_3,y_3y_4)$&$y_1y_2y_3y_4$&0&2\\
\hline
43&$(y_1y_2,y_1y_3,y_1y_4,y_2y_4,y_3y_4)$&$y_1y_2y_3y_4$&0&2\\
\hline
44&$(y_1y_2,y_1y_3,y_2y_3,y_2y_4,y_3y_4)$&$y_1y_2y_3y_4$&0&2\\
\hline
45&$(y_1y_2,y_1y_4,y_2y_3,y_2y_4,y_3y_4)$&$y_1y_2y_3y_4$&0&2\\
\hline
46&$(y_1y_2,y_1y_3,y_1y_4,y_2y_3,y_2y_4,y_3y_4)$&$y_1y_2y_3y_4$&0&3\\
\hline
47&$(y_1 y_2 y_3)$&$y_1y_2y_3$&0&0\\
\hline
48&$(y_1y_2y_3,y_1y_2y_4)$&$y_1y_2y_3y_4$&1&0\\
\hline
49&$(y_1y_2y_3,y_1y_3y_4)$&$y_1y_2y_3y_4$&1&0\\
\hline
50&$(y_1y_2y_3,y_2y_3y_4)$&$y_1y_2y_3y_4$&1&0\\
\hline
51&$(y_1y_2y_3,y_1y_4)$&$y_1y_2y_3y_4$&1&0\\
\hline
52&$(y_1y_2y_3,y_2y_4)$&$y_1y_2y_3y_4$&1&0\\
\hline
53&$(y_1y_2y_3,y_3y_4)$&$y_1y_2y_3y_4$&1&0\\
\hline
54&$(y_1y_2y_3,y_4)$&$y_1y_2y_3y_4$&1&0\\
\hline
55&$(y_1y_2y_3,y_1y_2y_4,y_1y_3y_4)$&$y_1y_2y_3y_4$&2&0\\
\hline
56&$(y_1y_2y_3,y_1y_2y_4,y_2y_3y_4)$&$y_1y_2y_3y_4$&2&0\\
\hline
57&$(y_1y_2y_3,y_1y_3y_4,y_2y_3y_4)$&$y_1y_2y_3y_4$&2&0\\
\hline
58&$(y_1y_2y_3,y_1y_2y_4,y_3y_4)$&$y_1y_2y_3y_4$&2&0\\
\hline
59&$(y_1y_2y_3,y_1y_3y_4,y_2y_4)$&$y_1y_2y_3y_4$&2&0\\
\hline
\end{tabular}\]

\[\begin{tabular}{| c | l | c |c|c| }
\hline
  \# & $ M''_m\quad\quad \quad \quad \quad \quad\quad \quad \quad \quad \quad\quad\quad \quad \quad \quad \quad\quad \quad \quad \quad \quad  $&$y_m$&$\betti_{2,y_m} $ & $\betti_{3,y_m}$ \\
\hline
60&$(y_1y_2y_3,y_2y_3y_4,y_1y_4)$&$y_1y_2y_3y_4$&2&0\\
\hline
61&$(y_1y_2y_3,y_1y_4,y_2y_4)$&$y_1y_2y_3y_4$&1&0\\
\hline
62&$(y_1y_2y_3,y_1y_4,y_3y_4)$&$y_1y_2y_3y_4$&1&0\\
\hline
63&$(y_1y_2y_3,y_2y_4,y_3y_4)$&$y_1y_2y_3y_4$&1&0\\
\hline
64&$(y_1y_2y_3,y_1y_2y_4,y_1y_3y_4,y_2y_3y_4)$&$y_1y_2y_3y_4$&3&0\\
\hline
65&$(y_1y_2y_3,y_1y_4,y_2y_4,y_3y_4)$&$y_1y_2y_3y_4$&1&1\\
\hline
66&$(y_1y_2y_3y_4)$&$y_1y_2y_3y_4$&0&0\\
\hline
\end{tabular}\]

\begin{note}\label{despues decidimos}
Table \ref{table} gives the multigraded Betti numbers $\betti_{i,y_m}(S/M''_m)$, under the assumption that $y_m= \lcm G''_m$. We should note, however, that there are instances when $y_m$ and $\lcm G''_m$ do not agree. In Example \ref{Example 5}, for instance, $y_m=y_1 y_2 y_3 y_4 $ and $\lcm G''_m=y_1 y_2 y_3$. Since $\lcm G''_m$ always divides $y_m$, when these monomials are not equal we must have $\betti_{i,y_m}(T/M''_m)=0$. 
\end{note}

\begin{theorem} \label{Betti}
Let $M$ be a monomial ideal in $4$ variables. Then 
\begin{align*}
&\betti_2(S/M)= \# \{m\in \mathbb{T}_M: y_m=\lcm G''_m\text{, and } \#G''_m=2 \}+\\
& \# \{m \in \mathbb{T}_M: y_m=\lcm G''_m \text{, } \#G''_m=3 \text{, and }M''_m \text{ is } \text{2-semidominant}\}+\\
& \#\{m\in \mathbb{T}_M: y_m=\lcm G''_m\text{, and }G''_m\text{ consists of }1\text{ cubic and }3 \text{ quadratic monomials}\}+\\
& 2\, \#\{m\in \mathbb{T}_M: y_m=\lcm G''_m \text{, } \#G''_m=3 \text{, and }M''_m \text{ is }\text{3-semidominant}\}+\\ 
& 3\,\#\{m\in \mathbb{T}_M: y_m=\lcm G''_m\text{, and }G''_m\text{ consists of }4\text{ cubic monomials}\}.
\end{align*}
\end{theorem}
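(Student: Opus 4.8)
The plan is to obtain the formula from Corollary \ref{Corollary 9}, Note \ref{despues decidimos}, and Table \ref{table} by bookkeeping over the $66$ possibilities for the squarefree twin ideal. Taking $i=2$ in Corollary \ref{Corollary 9} gives $\betti_2(S/M)=\sum_{m\in\mathbb{T}_M}\betti_{2,y_m}(T/M''_m)$, and by Note \ref{despues decidimos} every summand with $y_m\neq\lcm G''_m$ vanishes, so only the multidegrees $m$ with $y_m=\lcm G''_m$ contribute. For each such $m$, the ideal $M''_m$ is, after relabeling of the variables, one of $\#1,\dots,\#66$ in Section \ref{Section 2}, and $\betti_{2,y_m}(T/M''_m)$ equals the corresponding entry of the $\betti_{2,y_m}$ column of Table \ref{table}. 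Thus the task reduces to re-expressing that column through quantities attached to $M''_m$ that are independent of the relabeling used.

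Next I would sort the $66$ ideals by the value of $\betti_{2,y_m}$. From Table \ref{table}, this value is $1$ exactly for $\#3$, $\#7$--$\#13$, $\#48$--$\#54$ (which are precisely the ideals with two minimal generators), for $\#61$, $\#62$, $\#63$ (three minimal generators each), and for $\#65$ (whose minimal generators are one cubic and three quadratic monomials); it is $2$ exactly for $\#15$, $\#19$, $\#55$--$\#60$ (three minimal generators each); it is $3$ only for $\#64$ (four cubic generators); and it is $0$ for every other ideal on the list. Since $\#G''_m$ and the multiset of degrees of the minimal generators of $M''_m$ are relabeling invariants, and since dominance is an existential condition on the variables (so the number of nondominant generators is a relabeling invariant too), it then suffices to verify once and for all that, among the three-generator ideals $\#14$--$\#28$ and $\#55$--$\#63$, those that are $2$-semidominant are exactly $\#61$, $\#62$, $\#63$, those that are $3$-semidominant are exactly $\#15$, $\#19$, $\#55$--$\#60$, and the rest are $0$- or $1$-semidominant; and that among the four-generator ideals, $\#64$ is the only one whose minimal generators are four cubics and $\#65$ the only one with one cubic and three quadratics. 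This identifies the value-$1$ ideals with those $m$ having $y_m=\lcm G''_m$ and one of: $\#G''_m=2$, or $\#G''_m=3$ with $M''_m$ $2$-semidominant, or $G''_m$ equal to one cubic and three quadratic monomials; the value-$2$ ideals with those having $y_m=\lcm G''_m$, $\#G''_m=3$, and $M''_m$ $3$-semidominant; and the value-$3$ ideal with those having $y_m=\lcm G''_m$ and $G''_m$ equal to four cubic monomials. These five classes are pairwise disjoint: the first three have distinct generator counts $2$, $3$, $4$, while the fourth and fifth are separated from the others by semidominance type and degree multiset respectively.

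Finally, regrouping $\sum_{m:\, y_m=\lcm G''_m}\betti_{2,y_m}(T/M''_m)$ according to this partition, each $m$ in one of the first three classes contributes $1$, each $m$ in the fourth contributes $2$, each $m$ in the fifth contributes $3$, and every remaining term is $0$; collecting the contributions yields precisely the five-term expression of the statement, with coefficients $1,1,1,2,3$.

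I expect the main obstacle to be the case analysis in the middle step. Individually the cases are easy: a two-generator minimal generating set is automatically a dominant set, so by Theorem \ref{Taylor} the ideal $M''_m$ is then minimally resolved by its Taylor resolution, which has a single basis element in homological degree $2$, sitting in multidegree $\lcm G''_m$; the three- and four-generator computations are short, and every entry of Table \ref{table} is independently checkable with Macaulay2 [GS]. The delicate part is carrying this out uniformly over all $66$ ideals and confirming that the three invariants appearing in the statement partition the list in perfect agreement with the $\betti_{2,y_m}$ column --- in particular that no three-generator ideal is $3$-semidominant with $\betti_{2,y_m}\ne 2$, and none is $2$-semidominant with $\betti_{2,y_m}\ne 1$, since either would contradict the formula.
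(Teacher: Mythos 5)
Your proposal is correct and follows essentially the same route as the paper's own proof: both apply Corollary \ref{Corollary 9}, read off the $\betti_{2,y_m}$ column of Table \ref{table}, partition the contributing ideals by the value $1$, $2$, or $3$, and characterize each class by relabeling-invariant data (number of generators, semidominance type, degree multiset) before regrouping the sum. Your explicit appeal to Note \ref{despues decidimos} and your remark on relabeling invariance only make the same argument slightly more careful.
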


\begin{proof}
Let us denote by $\mathscr{N}$ the class of all $66$ squarefree ideals listed in Section \ref{Section 2}. According to Table \ref{table}, for every $m\in \mathbb{T}_M$, $\betti_{2,y_m} \in \{0,1,2,3\}$. If $\betti_{2,y_m} = 1$, then $y_m=\lcm G''_m$, and (after doing a change of variables) $M''_m$ is one of $\#3$, $\#7, \ldots, \#13$, $\#48,\ldots,\#54$, $\#61,\ldots,\#63$, $\#65$. Now,
$\{\#3, \#7, \ldots, \#13,\#48,\ldots,\#54\} = \{$ideals in $\mathscr{N}$ minimally generated by $2$ monomials$\}$;
$\{\#61,\ldots,\#63\} = \{2$-semidominant ideals in $\mathscr{N}$, minimally generated by 3 monomials$\}$; and 
$\#65$ is the only ideal in $\mathscr{N}$ minimally generated by $1$ cubic and $3$ quadratic monomials. Therefore, 
\begin{align*}
&\{m\in \mathbb{T}_M: \betti_{2,y_m} = 1\} = \{m \in \mathbb{T}_M: y_m=\lcm G''_m\text{, and } \#G''_m=2\} \cup\\
& \{m\in \mathbb{T}_M: y_m=\lcm G''_m \text{, } \#G''_m=3 \text{, and }M''_m \text{ is } \text{2-semidominant}\}\cup\\
& \{m\in\mathbb{T}_M: y_m=\lcm G''_m\text{, and }G''_m\text{ consists of 1 cubic and 3 quadratic monomials}\}.
\end{align*}
 If $\betti_{2,y_m} = 2$, then $ y_m=\lcm G''_m$, and (after doing a change of variables) $M''_m$ is one of $\#15,\#19,\#55,\ldots,\#60$.
Notice that $\{\#15,\#19,\#55,\ldots,\#60\} = \{3$-semidominant ideals of $\mathscr{N}$, minimally generated by 3 monomials$\}$. Therefore, 
\[\{m\in\mathbb{T}_M:\betti_{2,y_m}=2\} = \{m\in\mathbb{T}_M: y_m=\lcm G''_m \text{, } \#G''_m=3 \text{, and }M''_m \text{ is }\text{3-semidominant}\}.\]
 If $\betti_{2,y_m} = 3$, then $ y_m=\lcm G''_m$, and (after doing a change of variables) $M''_m$ is $\#64$, which is the only ideal of $\mathscr{N}$ minimally generated by 4 cubic monomials. Therefore, 
\[\{m\in\mathbb{T}_M:\betti_{2,y_m} = 3\} = \{m\in\mathbb{T}_M: y_m=\lcm G''_m\text{, and } G''_m\text{ consists of 4 cubic monomials}\}.\]
 Finally, by Corollary \ref{Corollary 9}, 
\begin{align*}
&\betti_{2}(S/M) = \sum\limits_{m\in\mathbb{T}_M} \betti_{2,y_m}(T/M''_m)= \\
&\sum\limits_{\{m\in\mathbb{T}_M:\betti_{2,y_m} = 1\}} \betti_{2,y_m} + \sum\limits_{\{m\in\mathbb{T}_M:\betti_{2,y_m} = 2\}} \betti_{2,y_m} + \sum\limits_{\{m\in\mathbb{T}_M:\betti_{2,y_m} = 3\}} \betti_{2,y_m}=\\
& \#\{m\in\mathbb{T}_M:\betti_{2,y_m}=1\} + 2\#\{m\in\mathbb{T}_M:\betti_{2,y_m} = 2\} + 3\#\{m\in\mathbb{T}_M:\betti_{2,y_m} = 3\}=\\
& \#\{m\in\mathbb{T}_M : y_m=\lcm G''_m \text{, and } \#G''_m=2 \} +\\
& \#\{m\in\mathbb{T}_M: y_m=\lcm G''_m \text{, } \#G''_m=3 \text{, and }M''_m \text{ is } \text{2-semidominant}\} +\\
&\#\{m\in\mathbb{T}_M: y_m=\lcm G''_m\text{, and }G''_m \text{ consists of 1 cubic and 3 quadratic monomials}\} +\\
& 2\#\{m\in\mathbb{T}_M: y_m=\lcm G''_m \text{, } \#G''_m=3 \text{, and }M''_m \text{ is }\text{3-semidominant}\}+\\
&3\#\{m\in\mathbb{T}_M: y_m=\lcm G''_m\text{, and }G''_m\text{ consists of 4 cubic monomials}\}.
\end{align*}
\end{proof}

\begin{note}\label{BettiNumbers}
We are now ready to compute all Betti numbers of an arbitrary monomial ideal $M$ in 4 variables. Indeed, if $M$ has minimal generating set $G$, then
\begin{align*}
&\betti_0(S/M)=1,\\
&\betti_1(S/M)= \#G,\\
&\betti_2(S/M) \text{ is given by Theorem \ref{Betti},} \\
&\betti_4(S/M) \text{ is given by Theorem \ref{fourthbetti}, and} \\
&\betti_3(S/M)=1+\betti_2(S/M)+\betti_4(S/M)-\#G \text{, by the characteristic of Euler-Poincaré}.
\end{align*}
\end{note}

\begin{example} \label{computations}
Let $M = (x_1^2 x_2^2, x_1^2 x_2 x_3, x_2 x_3 x_4^2,x_3^2 x_4^2)$. It is clear that $\betti_0(S/M) = 1$, and $\betti_1(S/M) = 4$. Since $M$ is nondominant, minimally generated by 4 monomials, it follows from Theorem \ref{fourthbetti}, that $\betti_4(S/M) = 0$. The critical step is the computation of $\betti_2(S/M)$, which we do next. By simple inspection, we can see that the 16 basis elements of $\mathbb{T}_M$ determine 11 different multidegrees (some basis elements have the same multidegree); namely, $m = 1, x_1^2x_2^2,x_1^2x_2x_3,x_2x_3x_4^2,x_3^2x_4^2,x_1^2x_2^2x_3,x_1^2x_2^2x_3x_4^2,x_1^2x_2^2x_3^2x_4^2,x_1^2x_2x_3x_4^2,x_1^2x_2x_3^2x_4^2,x_2x_3^2x_4^2$. Since the squarefree twin ideals $M''_m$, determined by $m = 1,x_1^2x_2^2,x_1^2x_2x_3,x_2x_3x_4^2$, and $x_3^2x_4^2$, are minimally generated by less than 2 monomials, they do not play any role in the computation of $\betti_2(S/M)$ (notice that the formula for $\betti_2(S/M)$ involves squarefree twin ideals for which $\#G''_m\geq 2$). In the next table we analyze the remaining 6 multidegrees.
\[
\begin {tabular}{|l|l|l|l|l|l|}
\hline
\#& $m$&$ y_m$ &$M_m$ &$ M'_m$&$M''_m$\\
\hline
$1$&$x_1^2x_2^2x_3$&$ y_1y_2y_3$ &$ (x_1^2x_2^2,x_1^2x_2x_3)$ &$( x_1^2x_2^2,x_1^2x_3)$ & $(y_1y_2,y_1y_3)$\\
\hline
$2$& $x_1^2x_2^2x_3x_4^2$&$y_1y_2y_3y_4$ & $(x_1^2x_2^2,x_1^2x_2x_3,x_2x_3x_4^2)$ & $(x_1^2x_2^2,x_1^2x_3,x_3x_4^2)$&$(y_1y_2,y_1y_3,y_3y_4)$\\
\hline
$3$&$x_1^2x_2^2x_3^2x_4^2$& $y_1y_2y_3y_4$&$(x_1^2x_2^2,x_1^2x_2x_3,x_2x_3x_4^2,x_3^2x_4^2)$&$(x_1^2,x_4^2)$&$(y_1,y_4)$\\
\hline
$4$&$x_1^2x_2x_3x_4^2$&$y_1y_2y_3y_4$&$(x_1^2x_2x_3,x_2x_3x_4^2)$&$(x_1^2x_2x_3,x_2x_3x_4^2)$&$(y_1y_2y_3,y_2y_3y_4)$\\
\hline
$5$&$x_1^2x_2x_3^2x_4^2$&$y_1y_2y_3y_4$&$(x_1^2x_2x_3,x_2x_3x_4^2,x_3^2x_4^2)$&$(x_1^2x_2,x_2x_4^2,x_3^2x_4^2)$&$(y_1y_2,y_2y_4,y_3y_4)$\\
\hline
$6$&$x_2x_3^2x_4^2$&$y_2y_3y_4$&$(x_2x_3x_4^2,x_3^2x_4^2)$&$(x_2x_4^2,x_3^2x_4^2)$&$(y_2y_4,y_3y_4)$\\
\hline
\end{tabular}\]
Let us consider case $\#3$. Since $y_m = y_1y_2y_3y_4\neq y_1y_4 = \lcm(G''_m)$, by  Theorem \ref{Betti}, $M''_m$ does not play any role in determining $\betti_2(S/M)$. Likewise, in cases $\#2$ and $\#5$, $M''_m$ is 1-semidominant, minimally generated by 3 monomials and, by Theorem \ref{Betti}, $M''_m$ does not contribute towards $\betti_2(S/M)$. Finally, in cases $\#1$, $\#4$, and $\#6$, $y_m = \lcm(G''_m)$, and $\#G''_m = 2$. According to Theorem \ref{Betti}, $\betti_2(S/M) = \#\{x_1^2x_2^2x_3,x_1^2x_2x_3x_4^2,x_2x_3^2x_4^2\} = 3$. Now, it follows from the characteristic of Euler-Poincaré that $\betti_3(S/M) = \betti_0(S/M)+\betti_2(S/M)+\betti_4(S/M)-\betti_1(S/M) = 1+3-4 = 0$. All in all, $\betti_0(S/M) = 1$, $\betti_1(S/M) = 4$, $\betti_2(S/M) = 3$, and $\betti_3(S/M) = \betti_4(S/M) = 0$.
\end{example}

\section{The advantage of having formulas}
In the previous section, we found an indirect way to compute $\betti_3(S/M)$. In this section, we give an explicit formula for $\betti_3(S/M)$. This formula will enable us to prove an interesting fact about an infinite family of ideals, something that we would not be able to do if we applied an algorithm to compute $\betti_3(S/M)$. Both formulas and algorithms are critical to the study of monomial resolutions, but they play complementary roles. This section underlines the advantage of having formulas. 

\begin{theorem} \label{ThirdBetti}
Let $M$ be a monomial ideal in $4$ variables. Then 
\begin{align*}
&\betti_3(S/M) = \#\{m\in \mathbb{T}_M:y_m = \lcm G''_m \text{, } \#G''_m=3 \text{, and }M''_m\text{ is dominant}\} +\\
&\#\{m\in\mathbb{T}_M:y_m = \lcm G''\text{, and }G''_m\text{ consists of 4 quadratic monomials}\}+\\
&\#\{m\in\mathbb{T}_M:y_m = \lcm G''\text{, and }G''_m\text{ consists of 1 cubic and 3 quadratic monomials}\}+\\
&2 \#\{m\in\mathbb{T}_M:y_m = \lcm G''\text{, and }G''_m\text{ consists of 1 linear and 3 quadratic monomials}\}+\\
&2\#\{m\in\mathbb{T}_M:y_m = \lcm G''\text{, and } \#G''_m=5\}+\\
&3\#\{m\in\mathbb{T}_M:y_m = \lcm G''\text{, and } \#G''_m=6\}.
\end{align*}
\end{theorem}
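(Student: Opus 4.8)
The plan is to argue exactly as in the proof of Theorem~\ref{Betti}, but reading the column $\betti_{3,y_m}$ of Table~\ref{table} in place of the column $\betti_{2,y_m}$. By Corollary~\ref{Corollary 9} we have $\betti_3(S/M)=\sum_{m\in\mathbb{T}_M}\betti_{3,y_m}(T/M''_m)$, and by Note~\ref{despues decidimos} every summand with $y_m\neq\lcm G''_m$ vanishes, while each remaining summand equals the value recorded in Table~\ref{table} for whichever of the $66$ ideals of $\mathscr{N}$ the ideal $M''_m$ represents after a relabeling of the variables. Since that column only takes the values $0,1,2,3$, it suffices to identify, for $v=1,2,3$, the set $\{m\in\mathbb{T}_M:\betti_{3,y_m}=v\}$ in combinatorial terms, and then write $\betti_3(S/M)=\#\{m:\betti_{3,y_m}=1\}+2\,\#\{m:\betti_{3,y_m}=2\}+3\,\#\{m:\betti_{3,y_m}=3\}$.

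First I would read off the three preimages from the table. The ideals of $\mathscr{N}$ with $\betti_{3,y_m}=1$ are $\#4$, $\#14$, $\#21$, $\#24,\ldots,\#28$ --- which are exactly the dominant ideals of $\mathscr{N}$ minimally generated by three monomials --- together with $\#29,\ldots,\#38$ --- exactly the ideals of $\mathscr{N}$ minimally generated by four quadratic monomials --- and $\#65$, the unique ideal of $\mathscr{N}$ minimally generated by one cubic and three quadratic monomials. The ideals with $\betti_{3,y_m}=2$ are $\#39$ and $\#40$ --- exactly the ideals of $\mathscr{N}$ minimally generated by one linear and three quadratic monomials --- together with $\#41,\ldots,\#45$, exactly the ideals of $\mathscr{N}$ with five minimal generators. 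The only ideal with $\betti_{3,y_m}=3$ is $\#46$, which is also the only ideal of $\mathscr{N}$ with six minimal generators. Substituting these six classes (each subject to the surviving constraint $y_m=\lcm G''_m$) into the displayed expression for $\betti_3(S/M)$ produces precisely the six summands of the theorem.

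The one step that requires genuine verification, rather than bookkeeping, is confirming that each of these six combinatorial classes coincides with the full set of ideals of $\mathscr{N}$ achieving the corresponding value of $\betti_3$, and that the classes attached to distinct values are disjoint. Since the number of minimal generators, the multiset of their degrees, and dominance and $p$-semidominance are all invariant under the relabeling of variables used to match $M''_m$ with an ideal of $\mathscr{N}$, this can be checked case by case over the $66$ ideals. The checks I expect to be the most delicate are that $\#4$, $\#14$, $\#21$, $\#24,\ldots,\#28$ are the only dominant ideals of $\mathscr{N}$ with three generators --- in particular that none of $\#15,\ldots,\#23$ and none of the three-generator cubic ideals $\#55,\ldots,\#63$ is dominant --- and that the generator counts isolate $\#41,\ldots,\#45$ and $\#46$ uniquely among the $66$ ideals. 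Granting these identifications, the formula follows at once, and the proof is structurally identical to that of Theorem~\ref{Betti}.
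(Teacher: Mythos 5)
Your proposal is correct and follows essentially the same route as the paper's own proof: partition the multidegrees by the value of $\betti_{3,y_m}\in\{1,2,3\}$ read off from Table \ref{table}, identify each fiber with the same six combinatorial classes of ideals ($\#4,\#14,\#21,\#24$--$\#28$; $\#29$--$\#38$; $\#65$; $\#39,\#40$; $\#41$--$\#45$; $\#46$), and sum via Corollary \ref{Corollary 9}. The only difference is that you make explicit the case-by-case verification that these classes exhaust and separate the relevant ideals, which the paper asserts without elaboration.
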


\begin{proof}
Let $\mathscr{N}$ denote the class of all $66$ squarefree ideals listed in section \ref{Section 2}. According to Table \ref{table}, for every $m\in\mathbb{T}_M$, $\betti_{3,y_m}\in \{0,1,2,3\}$. If $\betti_{3,y_m} = 1$, , then $y_m=\lcm G''_m$, and (after doing a change of variables) $M''_m$ is one of $\#4,\#14,\#21,\#24,\ldots,\#38,\#65$. Now,
$\{\#4, \#14, \#21,\#24,\ldots,\#28\} = \{$dominant ideals of $\mathscr{N}$, minimally generated by $3$ monomials$\}$;
$\{\#29,\ldots,\#38\} = \{$ideals of $\mathscr{N}$, minimally generated by 4 quadratic monomials$\}$; and 
$\#65$ is the only ideal in $\mathscr{N}$ minimally generated by $1$ cubic and $3$ quadratic monomials. Therefore,
\begin{align*}
&\{m\in \mathbb{T}_M: \betti_{3,y_m} = 1\} = \{m \in \mathbb{T}_M: y_m=\lcm G''_m \text{, } \#G''_m=3 \text{, and }M''_m\text{ is dominant}\} \cup\\
& \{m\in \mathbb{T}_M: y_m=\lcm G''_m \text{, and }G''_m\text{ consists of 4 quadratic monomials}\}\cup\\
& \{m\in\mathbb{T}_M: y_m=\lcm G''_m\text{, and }G''_m\text{ consists of 1 cubic and 3 quadratic monomials}\}.
\end{align*}
If $\betti_{3,y_m} = 2$, , then $y_m=\lcm G''_m$, and (after doing a change of variables) $M''_m$ is one of $\#39,\ldots,\#45$. Notice that $\{\#39,\#40\} = \{$ideals of $\mathscr{N}$, minimally generated by 1 linear and 3 quadratic monomials$\}$, and $\{\#41,\ldots,\#45\} = \{$ideals of $\mathscr{N}$, minimally generated by 5 monomials$\}$. Therefore,
\begin{align*}
&\{m\in \mathbb{T}_M: \betti_{3,y_m} = 2\} =\{m\in\mathbb{T}_M: y_m=\lcm G''_m\text{, and } \#G''_m=5\} \cup\\
&\{m \in \mathbb{T}_M: y_m=\lcm G''_m \text{, and }G''_m\text{ consists of 1 linear and 3 quadratic monomials}\}.
\end{align*}
If $\betti_{3,y_m} = 3$, , then $y_m=\lcm G''_m$, and (after doing a change of variables) $M''_m$ is $\#46$, which is the only ideal of $\mathscr{N}$ minimally generated by $6$ monomials. Therefore,
\[\{m\in\mathbb{T}_M:\betti_{3,y_m} = 3\} = \{m\in\mathbb{T}_M: y_m=\lcm G''_m\text{, and } G''_m\text{ consists of 6 monomials}\}.\]
Finally, by Corollary \ref{Corollary 9},
\begin{align*}
&\betti_{3}(S/M) = \sum\limits_{m\in\mathbb{T}_M}\betti_{3,y_m}(T/M''_m)=\\
&\sum\limits_{\{m\in\mathbb{T}_M:\betti_{3,y_m} = 1\}}\betti_{3,y_m}+\sum\limits_{\{m\in\mathbb{T}_M:\betti_{3,y_m} = 2\}}\betti_{3,y_m}+\sum\limits_{\{m\in\mathbb{T}_M:\betti_{3,y_m} = 3\}}\betti_{3,y_m}=\\
&\#\{m\in\mathbb{T}_M:\betti_{3,y_m} = 1\}+2\#\{m\in\mathbb{T}_M:\betti_{3,y_m} = 2\}+3\#\{m\in\mathbb{T}_M:\betti_{3,y_m} = 3\}=\\
& \#\{m\in \mathbb{T}_M:y_m = \lcm(G'') \text{, } \#G''_m=3 \text{, and }M''_m\text{ is dominant}\} +\\
&\#\{m\in\mathbb{T}_M:y_m = \lcm(G'')\text{, and }G''_m\text{ consists of 4 quadratic monomials}\}+\\
&\#\{m\in\mathbb{T}_M:y_m = \lcm(G'')\text{, and }G''_m\text{ consists of 1 cubic and 3 quadratic monomials}\}+\\
&2 \#\{m\in\mathbb{T}_M:y_m = \lcm(G'')\text{, and }G''_m\text{ consists of 1 linear and 3 quadratic monomials}\}+\\
&2\#\{m\in\mathbb{T}_M:y_m = \lcm(G'')\text{, and }G''_m\text{ consists of 5 monomials}\}+\\
&3\#\{m\in\mathbb{T}_M:y_m = \lcm(G'')\text{, and }G''_m\text{ consists of 6 monomials} \}.
\end{align*}  
\end{proof}

\begin{lemma}\label{Lemma 1}
Let $m$ be a multidegree of $\mathbb{T}_M$. Let $M_m= (m_1,\ldots,m_q)$, and denote by $M''_m = (m''_1,\ldots,m''_q)$ the squarefree twin ideal of $M_m$. If there are $1\leq i,j,k\leq q$ such that $m_k\mid \lcm(m_i,m_j)$, then $m''_k\mid\lcm(m''_i,m''_j)$.
\end{lemma}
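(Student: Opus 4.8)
The plan is to check the divisibility $m''_k \mid \lcm(m''_i,m''_j)$ one variable at a time, after first translating the exponents of the squarefree twins back into statements about the exponents of $m_1,\dots,m_q$ and the coordinates of $m$.

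First I would set up notation as in Constructions \ref{Construction 2} and \ref{Construction 4}: write $m_i = x_1^{\alpha_{i1}}\cdots x_4^{\alpha_{i4}}$ for each $i$, so that $m = x_1^{\alpha_1}\cdots x_4^{\alpha_4}$ with $\alpha_\ell = \Max(\alpha_{1\ell},\dots,\alpha_{q\ell})$, and $m''_i = y_1^{\delta_{i1}}\cdots y_4^{\delta_{i4}}$ with $\delta_{i\ell} = 1$ precisely when $\beta_{i\ell} = \alpha_\ell$. The first step is to record the clean equivalence
\[\delta_{i\ell} = 1 \iff \alpha_{i\ell} = \alpha_\ell,\]
which follows from the definition of $\beta_{i\ell}$ together with the remark that $\beta_{i\ell} = 0$ can equal $\alpha_\ell$ only if $\alpha_\ell = 0$, and in that degenerate case $\alpha_{i\ell} = 0 = \alpha_\ell$ as well (so the equivalence still holds). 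Thus the $\delta$-exponents only detect which generators attain the maximal exponent $\alpha_\ell$ in each slot.

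Since every $\delta_{i\ell}$ lies in $\{0,1\}$, the divisibility $m''_k \mid \lcm(m''_i,m''_j)$ is equivalent to the implication: for each $\ell\in\{1,2,3,4\}$, if $\delta_{k\ell} = 1$ then $\delta_{i\ell}=1$ or $\delta_{j\ell}=1$. So I would fix $\ell$ with $\delta_{k\ell}=1$, i.e.\ $\alpha_{k\ell} = \alpha_\ell$, and run the following inequality chase. The hypothesis $m_k \mid \lcm(m_i,m_j)$ gives $\alpha_{k\ell} \le \Max(\alpha_{i\ell},\alpha_{j\ell})$; on the other hand $\Max(\alpha_{i\ell},\alpha_{j\ell}) \le \alpha_\ell$ because $\alpha_\ell$ is the maximum over all $q$ generators. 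Combining these with $\alpha_{k\ell} = \alpha_\ell$ forces $\Max(\alpha_{i\ell},\alpha_{j\ell}) = \alpha_\ell$, hence $\alpha_{i\ell} = \alpha_\ell$ or $\alpha_{j\ell} = \alpha_\ell$, i.e.\ $\delta_{i\ell}=1$ or $\delta_{j\ell}=1$. This is exactly what was needed, and since $\ell$ was arbitrary the lemma follows.

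I do not expect a genuine obstacle: the entire content is the fact that $\alpha_\ell$ is a global maximum, which is what converts ``$m_k$ divides $\lcm(m_i,m_j)$'' into ``wherever $m_k$ is maximal, so is $m_i$ or $m_j$.'' The only spot requiring care is the bookkeeping that passes from the exponents of $m_k$ to those of $m''_k$ through the two layers of definitions ($\beta$, then $\delta$); once the equivalence $\delta_{i\ell}=1\iff\alpha_{i\ell}=\alpha_\ell$ is established, the remainder is the short two-step estimate above.
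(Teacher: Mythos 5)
Your proposal is correct and follows essentially the same argument as the paper: both reduce to the coordinatewise statement that wherever $\alpha_{k\ell}$ attains the global maximum $\alpha_\ell$, the hypothesis $\alpha_{k\ell}\le\Max(\alpha_{i\ell},\alpha_{j\ell})\le\alpha_\ell$ forces one of $\alpha_{i\ell},\alpha_{j\ell}$ to attain it too. The only cosmetic difference is that you work directly with the $\{0,1\}$-exponents $\delta$ of $M''_m$, whereas the paper first proves the divisibility for the twin ideal $M'_m$ via the $\beta$-exponents and then passes to $M''_m$ by the identification of representations.
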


\begin{proof}
Denote by $M'_m = (m'_1,\ldots,m'_q)$ the twin ideal of $M_m$. Let $m_s = x_1^{\alpha_{s_1}}\ldots x_4^{\alpha_{s_4}}$, for all $s=1,\ldots,q$. Then, $m=x_1^{\alpha_1}\ldots x_4^{\alpha_4}$, where $\alpha_r = \Max(\alpha_{1_r},\ldots,\alpha_{q_r})$. By definition, for all $s=1,\ldots,q$, $m'_s = x_1^{\beta_{s_1}}\ldots x_4^{\beta_{s_4}}$, where \\
$\beta_{s_r} = \begin{cases}
                       \alpha_r &\quad\text{if } \alpha_{s_r} = \alpha_r, \\
                         0         &\quad\text{otherwise.}
\end{cases}$\\
Therefore,
\begin{align*}
 \lcm(m_i,m_j) &= x_1^{\Max(\alpha_{i_1},\alpha_{j_1})}\ldots x_4^{\Max(\alpha_{i_4},\alpha_{j_4})}, \text{ and} \\
\lcm(m'_i,m'_j)& = x_1^{\Max(\beta_{i_1},\beta_{j_1})}\ldots x_4^{\Max(\beta_{i_4},\beta_{j_4})}.
\end{align*}
Since $m_k\mid\lcm(m_i,m_j)$, $\alpha_{k_r}\leq \Max(\alpha_{i_r},\alpha_{j_r})$, for all $r$. We will prove that $m'_k\mid\lcm(m'_i,m'_j)$ by showing that $\beta_{k_r}\leq \Max(\beta_{i_r},\beta_{j_r})$. If $\beta_{k_r} = 0$, then $\beta_{k_r}\leq \Max(\beta_{i_r},\beta_{j_r})$. On the other hand, if $\beta_{k_r} = \alpha_r$, we must have that $\alpha_{k_r} = \alpha_r$. Thus, $\Max(\alpha_{i_r},\alpha_{j_r}) = \alpha_r$, which means that either $\alpha_{i_r} = \alpha_r$ or $\alpha_{j_r} = \alpha_r$. Hence, either $\beta_{i_r} = \alpha_r$ or $\beta_{j_r} = \alpha_r$, and then $\Max(\beta_{i_r},\beta_{j_r}) = \alpha_r = \beta_{k_r}$. We have proven that $m'_k\mid \lcm(m'_i,m'_j)$. Finally, since $m''_k$, $m''_i$, $m''_j$ are just a different representation of $m'_k$, $m'_i$, $m'_j$, respectively, the result follows.
\end{proof}

\begin{theorem}\label{Theorem 2}
Let $M=(m_1, \ldots,m_q)$, with $q\geq 2$. Suppose that there is $1 \leq k \leq q$, such that $m_k\mid\lcm(m_i,m_j)$, for all $i \neq j$. Then $\pd(S/M) = 2$.
\end{theorem}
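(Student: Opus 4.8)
The plan is to compute the Betti numbers of $S/M$ directly from the formulas just proved. Since $M$ is minimally generated by $q\ge 2$ monomials it is not principal, so $\pd(S/M)\ge 2$, and $\pd(S/M)\le 4$ because $S$ has four variables; hence it suffices to prove $\betti_3(S/M)=\betti_4(S/M)=0$, for which I would invoke Theorems \ref{ThirdBetti} and \ref{fourthbetti}. Write $\deg_x n$ for the exponent of a variable $x$ in a monomial $n$. The point I will use constantly is that ``$m_k\mid\lcm(m_i,m_j)$'' says exactly: for every variable $x$, at most one of $m_i,m_j$ has $x$-exponent strictly below $\deg_x m_k$.

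For $\betti_4(S/M)=0$ I would, by Theorem \ref{fourthbetti}, show $\mathscr{D}_M=\varnothing$. Suppose $D=\{g_1,g_2,g_3,g_4\}\subseteq G$ were a dominant $4$-set no generator of which strongly divides $\lcm D$. First $m_k\notin D$: a dominant member of $D$ strictly dominates the other three in some variable, which for $m_k$ would contradict $m_k\mid\lcm(g,g')$ for two other members $g,g'$ of $D$. So $D\subseteq G\setminus\{m_k\}$, and since $m_k$ divides no $g_i$ there is for each $i$ a variable in which $m_k$'s exponent exceeds that of $g_i$; the hypothesis forces these four ``deficiency variables'' to be distinct, so after relabelling $x_i$ is the deficiency variable of $g_i$, and then $\deg_{x_i}g_j\ge\deg_{x_i}m_k$ for every $j\ne i$. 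Dominance of $D$ attaches to each $g_i$ a dominant variable $x_{\sigma(i)}$ with $\sigma$ a permutation of $\{1,2,3,4\}$; the inequality just displayed (with $j\ne i$) shows $g_i$ is not dominant in $x_i$, so $\sigma$ is a derangement. Now fix $i$ and pick $j\notin\{i,\sigma(i)\}$: from ``$g_i$ dominant in $x_{\sigma(i)}$'' and ``$\deg_{x_{\sigma(i)}}g_j\ge\deg_{x_{\sigma(i)}}m_k$'' (valid since $x_{\sigma(i)}$ is the deficiency variable of $g_{\sigma(i)}$ and $j\ne\sigma(i)$) we get $\deg_{x_{\sigma(i)}}(\lcm D)\ge\deg_{x_{\sigma(i)}}g_i>\deg_{x_{\sigma(i)}}m_k$. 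As $i$ ranges over $\{1,2,3,4\}$ so does $\sigma(i)$, so $m_k$ strongly divides $\lcm D$; as $m_k\in G$ this contradicts $D\in\mathscr{D}_M$. Hence $\mathscr{D}_M=\varnothing$ and $\betti_4(S/M)=0$.

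For $\betti_3(S/M)=0$, by Corollary \ref{Corollary 9} I would show $\betti_{3,y_m}(T/M''_m)=0$ for every multidegree $m$ of $\mathbb{T}_M$; by Theorem \ref{Theorem 1} and Corollary \ref{Corollary 8} this equals $\betti_{3,m}(S/M_m)$, so it comes down to the ideal $M_m$ generated by the members of $G$ dividing $m$. If $m_k\nmid m$, choose a variable $x$ with $\deg_x m_k>\deg_x m$: every generator of $M_m$ divides $m$, hence is deficient in $x$, and at most one monomial of $G\setminus\{m_k\}$ can be, so $M_m$ has at most one generator and $\betti_{3,m}(S/M_m)=0$. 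If $m_k\mid m$, then $m_k$ is among the generators of $M_m$ and divides every pairwise lcm of them, so by Lemma \ref{Lemma 1} the squarefree twin $M''_m$ has a minimal generator dividing every pairwise lcm of its minimal generators; inspecting the $66$ ideals against Table \ref{table} (with $\betti_{3,y_m}=0$ automatic when $y_m\ne\lcm G''_m$), every ideal with that property has $\betti_{3,y_m}=0$ --- for the $3$-generated ones the property is just non-dominance. Hence $\betti_3(S/M)=0$, and together with $\betti_4(S/M)=0$ and $\pd(S/M)\ge 2$ this gives $\pd(S/M)=2$.

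I expect the step $\mathscr{D}_M=\varnothing$ to be the crux: one must extract from ``$m_k$ divides every pairwise lcm'' two matched quadruples of distinct variables --- the deficiency variables of the $g_i$ and their dominant variables --- see that they are derangements of one another, and combine this with dominance to force strong divisibility of $\lcm D$. A shorter field-free alternative that sidesteps the table is induction on $q$: normalizing $k=1$, for $q\ge 3$ apply the mapping cone to $0\to S/(M'\!:\!m_q)\xrightarrow{\,m_q\,}S/M'\to S/M\to 0$ with $M'=(m_1,\dots,m_{q-1})$; the hypothesis forces $M'\!:\!m_q$ to be the principal ideal generated by $m_k/\gcd(m_k,m_q)$, whence $\pd(S/M)\le\max\{\pd(S/M'),\,2\}=2$ (the base case $q=2$ being the Taylor resolution), while $\pd(S/M)\ge 2$ as before.
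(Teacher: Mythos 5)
Your proposal is correct, and its main line --- reduce to the squarefree twins $M''_m$, use Lemma \ref{Lemma 1} to transport the property ``$m_k$ divides every pairwise lcm'' to $M''_m$, and check that no ideal in the list of $66$ with that property has $\betti_{3,y_m}\neq 0$ --- is exactly the paper's argument; the paper writes out the finite check you leave as ``inspection'' (dominant $3$-generated ideals, quadratic ideals with $\geq 4$ generators, one linear plus three quadratics, one cubic plus three quadratics), and it handles the small point you gloss over, namely that $m''_k$ need not itself be a minimal generator of $M''_m$ but is a multiple of one. Two remarks on where you diverge. First, your direct verification that $\mathscr{D}_M=\varnothing$ (the derangement argument) is correct but superfluous: once $\betti_3(S/M)=0$, minimality of the resolution forces $\betti_4(S/M)=0$, which is what the paper implicitly uses when it concludes $\pd(S/M)=2$ from $\betti_3=0$ alone. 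Second, your closing alternative is a genuinely different and arguably stronger proof: the hypothesis makes each colon ideal $(m_1,\dots,m_{q-1}):m_q$ principal (generated by $m_k/\gcd(m_k,m_q)$), so induction on $q$ with the short exact sequence $0\to S/(M'\!:\!m_q)\to S/M'\to S/M\to 0$ gives $\pd(S/M)\leq 2$ with no reference to the table, to four variables, or to the base field; this yields the statement in a polynomial ring in any number of variables, whereas the paper's proof is deliberately an application of its tetravariate machinery (Theorem \ref{ThirdBetti}). If you want a self-contained proof, the colon-ideal route is the one to write up in full.
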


\begin{proof}
Let $m$ be a multidegree of $\mathbb{T}_M$. Let $M_m = (l_1,\ldots,l_r)$ be the ideal generated by the generators of $M$ that divide $m$. Denote by $M'_m = (l'_1,\ldots,l'_r)$ and $M''_m = (l''_1,\ldots,l''_r)$ the twin and squarefree twin ideals of $M_m$, respectively. We will show that $\betti_{3,m}(S/M) = 0$. By Theorem \ref{ThirdBetti}, it is enough to show that $M''_m$ is not a dominant ideal minimally generated by three monomials, or a quadratic ideal minimally generated by at least four monomials (according to Table \ref{table}, if $\#G''_m \geq 5$, then $M''_m$ must be quadratic), or an ideal minimally generated by one linear and three quadratic monomials, or an ideal minimally generated by one cubic and three quadratic monomials.

 If $r\leq 2$, then $M''_m$ cannot be one of the ideals above. Suppose then that $r\geq3$. By hypothesis, there is $1 \leq k \leq q$, such that $m_k\mid\lcm(m_i,m_j)$, for all $i \neq j$. In particular, $m_k\mid\lcm(l_1,l_2)$. Since $l_1,l_2\mid m$, it follows that $m_k\mid m$. Hence, $m_k$ is one of $l_1,\ldots,l_r$. Without loss of generality, we may assume that $m_k = l_r$. 

Suppose that $M''_m$ is a dominant ideal minimally generated by 3 monomials. Then $M''_m = (l''_1,\ldots,l''_r) = (l''_a,l''_b,l''_c)$, where $1\leq a,b,c\leq r$. Notice that $l''_r$ must be a multiple of one of $l''_a,l''_b,l''_c$; say $l''_r$ is a multiple of $l''_a$. By definition of dominance, $l''_a\nmid \lcm(l''_b,l''_c)$. It follows that $l''_r\nmid\lcm(l''_b,l''_c)$. By Lemma \ref{Lemma 1}, $l_r\nmid \lcm(l_b,l_c)$, which contradicts the hypothesis. We conclude that $M''_m$ is not a dominant ideal minimally generated by three monomials. 

Suppose now that $M''_m$ is a quadratic ideal, minimally generated by at least four monomials. That is, $M''_m = (l''_1,\ldots,l''_r) = (l''_a,l''_b,l''_c,l''_d,\ldots)$, where $1\leq a,b,c,d\leq r$ and $l''_a,l''_b,l''_c,l''_d$ are quadratic squarefree. Let $l''_a=x_ix_j$, and let $\{u,v\} = \{x_1,\ldots,x_4\}\setminus\{x_i,x_j\}$. Note that there are exactly three quadratic squarefree monomials divisible by $x_i$; they are $x_ix_j$; $x_iu$; $x_iv$. Since $l''_a$ is one of them, at most $2$ of $l''_b,l''_c,l''_c$ are divisible by $x_i$. If less than $2$ of $l''_b,l''_c,l''_d$ are divisible by $x_i$, then at least $2$ of $l''_b,l''_c,l''_d$ are not divisible by $x_i$. This implies that $l''_a$ does not divide the $\lcm$ of every pair of minimal generators of $M''_m$. On the other hand, if $2$ of $l''_b,l''_c,l''_d$ are divisible by $x_i$, then none of these $2$ is divisible by $x_j$. Once again, we conclude that $l''_a$ does not divide the $\lcm$ of every pair of minimal generators of $M''_m$. Since $l''_a$ is arbitrary, we conclude that no minimal generator of $M''_m$ divides the $\lcm$ of every pair of minimal generators of $M''_m$. By Lemma \ref{Lemma 1}, we conclude that no minimal generator of $M_m$ divides the $\lcm$ of every pair of minimal generators of $M_m$, a contradiction.

Next, suppose that $M''_m$ is minimally generated by one linear and three quadratic monomials. Then, $M''_m$ can be expressed in the form $M''_m = (l''_1,\ldots,l''_r) = (n''_1 = y_1y_2,n''_2 = y_1y_3,n''_3 = y_2y_3,n''_4 = y_4)$. For $1\leq i,j\leq 3$, $i\neq j$, $n''_i\nmid\lcm(n''_j,n''_4)$. Also,
 $n''_4 \nmid\lcm(n''_1,n''_2)$. Thus, no minimal generator of $M''_m$ divides the $\lcm$ of every pair of minimal generators of $M''_m$. By Lemma \ref{Lemma 1}, no minimal generator of $M_m$ divides the $\lcm$ of every pair of minimal generators of $M_m$, a contradiction. 

Finally, suppose that $M''_m$ is minimally generated by one cubic and three quadratic monomials. Then, $M''_m$ can be expressed in the form $M''_m = (l''_1,\ldots,l''_r) = (n''_1 = y_1y_2y_3,n''_2=y_1y_4,n''_3=y_2y_4,n''_4=y_3y_4)$. Note that $n''_1\nmid \lcm(n''_2,n''_3)$; likewise, $n''_2\nmid\lcm(n''_3,n''_4)$; $n''_3\nmid\lcm(n''_2,n''_4)$, and $n''_4\nmid\lcm(n''_2,n''_3)$. Thus, no minimal generator of $M''_m$ divides the $\lcm$ of every pair of minimal generators of $M''_m$. By Lemma \ref{Lemma 1}, no minimal generator of $M_m$ divides the $\lcm$ of every pair of minimal generators of $M_m$, a contradiction.

This proves that $\betti_{3,m} = 0$. Since $m$ is arbitrary, $\betti_3(S/M) = 0$. Hence, $\pd(S/M) = 2$. 
\end{proof}

\begin{example}
Let $M = (m_1 = x_1^2 x_2^2 x_3, m_2=x_1^2 x_2^2 x_4, m_3=x_1 x_3^2 x_4^2, m_4=x_2 x_3^2 x_4^2, m_5=x_1 x_2 x_3 x_4)$. It is easy to see that $m_5\mid\lcm(m_i,m_j)$, for all $i \neq j$. By Theorem \ref{Theorem 2},
$\pd(S/M) = 2$.
\end{example}

As Example \ref{computations} shows, the reciprocal to Theorem \ref{Theorem 2} does not hold.

\section{Final comments}
If an ideal $M$ of $S$ were chosen at random, and we had to guess which of $\betti_2=\betti_2(S/M)$ and $\betti_3=\betti_3(S/M)$ is larger, we would likely say that $\betti_2>\betti_3$. This impression may be due to the fact that most of the examples commonly studied involve ideals minimally generated by $3$, $4$ or $5$ monomials, and for such ideals, $\betti_2\geq \betti_3$. Indeed, suppose that $M=(m_1,\ldots,m_q)$, where $3\leq q\leq 5$. Then $\mathbb{T}_M$ has ${q\choose 2}$ and ${q\choose 3}$ basis elements in homological degrees $2$ and $3$, respectively. Let $\betti_2 = {q\choose 2} - k$. This means that, starting with $\mathbb{T}_M$, it is possible to perform $k$ consecutive cancellations between $k$ basis elements in homological degree $2$ and $k$ basis elements in homological degree $3$. Therefore, $\betti_3\leq {q\choose 3} - k\leq {q\choose 2} - k = \betti_2$. However, when $M$ is minimally generated by $q\geq 6$ monomials, ${q\choose 3}\geq {q\choose 2}$, and it is possible that $\betti_3$ will be larger than $\betti_2$. For instance, when 
$M =(x_1^3,x_1^2 x_2, x_1 x_2^2, x_2^3, x_3^3, x_3^2 x_4, x_3 x_4^2, x_4^3)$, we have $\betti_3= 24>22=\betti_2$. 

However, in order for $\betti_3$ to be larger than $\betti_2$ an additional condition must be satisfied. By the characteristic of Euler-Poincaré, $\betti_0-\betti_1+\betti_2-\betti_3+\betti_4 = 0$. Hence, 
$\betti_3 - \betti_2 = \betti_0 + \betti_4 - \betti_1 =1+\betti_4 - \betti_1$. Thus, in order for $\betti_3$ to be larger than $\betti_2$ it is necessary (but not sufficient) that $\pd(S/M) = 4$. How restrictive is this condition? De Loera, Hosten, Krone, and Silverstein proved that (in a probabilistic sense) almost all monomial ideals $M$ in $S$ satisfy $\pd(S/M) = 4$ [DHKS]. Thus, there is no obvious reason to believe that one of $\betti_2$ and $\betti_3$ has a better chance to be the larger number.

 In addition to this, the explicit formulas for $\betti_2$ and $\betti_3$ given by Theorems \ref{Betti} and \ref{ThirdBetti}, respectively, have some similarities that make it difficult to conjecture which Betti number is usually larger. We invite the reader to explore this problem.

On a different note, there are examples of monomial ideals in 6 variables whose Betti numbers are characteristic-dependent [Pe, Example 12.4], while the Betti numbers of monomial ideals in 4 variables are characteristic-independent, as we proved in Section 5. Therefore, the first case where we can encounter examples of ideals whose minimal resolutions depend on the base field must be in either 5 or 6 variables. Notice that the study of characteristic dependence in 5 variables can be carried out in the same manner as in 4 variables. In fact, the results of Section 3 where taken from [Al2], and hold for monomial ideals in n variables.  

  \bigskip

\noindent \textbf{Acknowledgements}: My family and I lived in the U.S. for many years. In 2018, because of problems with our visas, we had to leave the U.S. and move back to our home country, Argentina. At the time of this writing, we have lived in Argentina for several months, in the midst of exceptional hardship. In spite of our difficulties, God has provided for the needs of my family, and has surrounded me with the love and support of my wife Danisa. She encouraged me to keep exploring the wonderful world of mathematics and, when I finally found something worth mentioning, she patiently typed my work. Prov. 31:29.

\end{document}